\def\expandafter\normalsize\expandafter{%
    \normalsize
    \setlength\abovedisplayskip{7pt}
    \setlength\belowdisplayskip{7pt}
    \setlength\abovedisplayshortskip{5pt}
    \setlength\belowdisplayshortskip{5pt}
}
\newcommand{\excise}[1]{}
\newtheorem{thm}{Theorem}[section]
\newtheorem{lemma}[thm]{Lemma}
\newtheorem{cor}[thm]{Corollary}
\newtheorem{prop}[thm]{Proposition}
\newtheorem{conj}[thm]{Conjecture}
\newtheorem{ex}[thm]{Example}
\newtheorem{rem}[thm]{Remark}
\newtheorem{Warn}[thm]{Caution}
\def\rr{\mathbb R}
\def\ov{\overline}
\def\la{\lambda}
\def\ga{\gamma}
\def\al{\alpha}
\def\be{\beta}
\def\vp{\varphi}
\def\vk{\varkappa}
\def\ssu{\subset}
\def\<{\langle}
\def\>{\rangle}
\def\0{{\mathbf 0}}
\def\sgn{{\rm sgn}}
\def\.{\hskip.06cm}
\def\ts{\hskip.03cm}
\def\SM{{\mathcal{K}}}
\def\MN{ {\text {\rm MN} } }
\def\size{{\text{size}}}
\def\sign{{\rm sign}}
\def\KP{{\textsc{KP}}}
\def\P{{\rm{\textsf{P}}}}
\def\FP{{\rm{\textsf{FP}}}}
\def\SP{{\rm{\textsf{\#P}}}}
\def\FP{{\rm{\textsf{FP}}}}
\def\NP{{\rm{\textsf{NP}}}}
\def\KN{{\textsc{Kron}}}
\def\CP{{\textsc{CharP}}}
\def\GP{{\rm{\textsf{GapP}}}}
\def\pa{P}
\def\gi{{\mathcal{X}}}
\def\Poly{{\text{\rm Poly}}}
\def\LR{{LR}}
\def\shp{\varphi}
\newcommand{\shortp}[1]{\shp(#1)}
\begin{document}
\title[Complexity of Kronecker coefficients]{On the complexity of computing Kronecker coefficients}

\author[Igor~Pak]{ \ Igor~Pak$^\star$}

\author[Greta~Panova]{ \ Greta~Panova$^\star$}

\date{\today}


\thanks{\thinspace ${\hspace{-.45ex}}^\star$Department of Mathematics, UCLA, Los Angeles, CA 90095, \ts
\texttt{\{pak,panova\}@math.ucla.edu}}

\begin{abstract}
We study the complexity of computing \emph{Kronecker coefficients}
$g(\la,\mu,\nu)$.  We give explicit bounds in terms of the number of
parts~$\ell$ in the partitions, their largest part size $N$ and the
smallest second part~$M$ of the three partitions. When $M = O(1)$,
i.e.~one of the partitions is~\emph{hook-like}, the bounds are linear
in~$\log N$, but depend exponentially on~$\ell$. Moreover, similar bounds hold even when $M=e^{O(\ell)}$.
By a separate argument, we show that the positivity of Kronecker
coefficients can be decided in $O(\log N)$ time for a bounded
number~$\ell$ of parts and without restriction on~$M$.
Related problems of computing Kronecker coefficients when one partition
is a hook, and computing characters of~$S_n$ are also considered.
\end{abstract}

\maketitle



\section{Introduction and main results}\label{intro}

\noindent
The study of \emph{Kronecker coefficients} $g(\la,\mu,\nu)$ of the symmetric
group~$S_n$ has rare qualities of being  classical, highly technical,
and largely mysterious.
The area was initiated by Murnaghan 75 years ago~\cite{Mur}, and continued
to be active for decades, with scores of interesting connections to other
areas. Despite a large body of work on the Kronecker coefficients, both
classical and very recent, it is universally agreed that ``frustratingly little
is known about them''~\cite{Bur}.
The problem of finding a combinatorial interpretation for
$g(\la,\mu,\nu)$, can be restated as whether computing the coefficients
is in~\SP. It remains a major open problem, one of the oldest unsolved
problems in Algebraic Combinatorics.

More recently, the interest in computing Kronecker coefficients has
intensified in connection with  \emph{Geometric Complexity Theory},
 pioneered recently as an approach to the $\P$~vs.~$\NP$~problem
(see~\cite{Mul2,MS,Reg}).
With Valiant's theory of determinant computations as its starting point,
their approach relies, among other things,
on the (conjectural) ability to decide in polynomial time
the positivity of Kronecker coefficients and their plethystic generalizations.
Envisioned as a far reaching mathematical program requiring over 100~years to
complete~\cite{For}, this approach led to a flurry of activity in an attempt
to understand and establish some critical combinatorial and computational
properties of Kronecker coefficients (see~\cite{BOR,BI,CDW,Ike,Mul1}).  This paper
is a new advance in this direction.

While we present several algorithmic and complexity results, they are centered
around a single unifying problem.  We are trying
to understand what exactly makes the Kronecker coefficients hard to compute.
Since the problem is \SP-hard in general (see~\cite{BI}), a polynomial time algorithm for deciding positivity
is unlikely to exist. On the other hand, the problem can be viewed as a
generalization of LR~coefficients $c_{\be\ga}^\al$, another
\SP-complete problem.   In view of a nice geometric interpretation of the latter,
it can be computed using \emph{Barvinok's algorithm} in polynomial
time for any fixed~$\ell$ (see~$\S$\ref{ss:fin-barv}).  We show that
a similar analysis applies to Kronecker coefficients.  In other words,
it is not the large part sizes that make an obstacle, but the number~$\ell$
of parts in the partitions.

\smallskip

\noindent
Our main result is the following theorem, which introduces a new
parameter~$M$, the smallest number among the second parts of the three partitions.
Our complexity bound is general, but is especially sharp for triples of
partitions when one is \emph{hook-like}.  We state the theorem here
in a somewhat abbreviated form, as we postpone the definitions and details.

%

\medskip

\noindent
{\bf Main Theorem~\ref{thm:nu_bounded}.} \.
{\it Let $\la,\mu,\nu \vdash n$ be partitions with lengths
$\ell(\la),\ell(\mu)$, $\ell(\nu) \le \ell$, the largest parts
$\la_1,\mu_1,\nu_1 \le N$, and $\nu_2 \le M$.  Then the
Kronecker coefficients $g(\la,\mu,\nu)$ can be computed in time
$$O(\ell \log N)+ (\ell\log M)^{O(\ell^3\log\ell)}\, .$$
}

\medskip

To illustrate the result, consider several special cases of the theorem.
First, when~$\ell$ is fixed, the recent breakthrough in~\cite{CDW}
gives a polynomial time algorithm for computing Kronecker coefficients.
The main theorem in this case is Main Lemma~\ref{prop:g_poly} which
gives explicit bounds on the dependence of~$\ell$. 
   Curiously, even for a simpler problem of
deciding \emph{positivity} $g(\la,\mu,\nu)>0$ this gives the best
known general bound.  Our Theorem~\ref{thm:fixed} uses the
\emph{semigroup property} of Kronecker coefficients to give a
surprisingly powerful \emph{linear} bound, but without giving explicitly the dependence
on~$\ell$.

Second, when we have $\la_2,\mu_2,\nu_2 \le M$ and $N$ is large compared to~$(M\ell)$,
the Kronecker coefficients stabilize to \emph{reduced Kronecker coefficients},
which generalize LR~coefficients and are believed to be easier to compute
(see~$\S$~\ref{ss:fin-red}).  Our Main Theorem lends further support to this
conjecture.

Finally, when $M=O(1)$ the theorem gives a new type complexity bound
of computing $g(\la,\mu,\nu)>0$ when $\nu$ is \emph{hook-like}.
This may seem surprising, as already the case of \emph{hooks} (i.e.\/ when $M=1$),
received considerable attention in the literature (see e.g.~\cite{Las,Rem,Ros}).
There, even in the simplest cases, the resulting formulas for Kronecker coefficients
seem rather difficult, and the recent combinatorial interpretation by Blasiak
unsuitable for efficient computation~\cite{Bla}.  Curiously, we use Blasiak's
combinatorial interpretation of Kronecker coefficients to show that
computing $g(\la,\mu,\nu)$ is in~\SP~when $\nu$ is a hook
(Theorem~\ref{thm:hooks}).

\medskip

\noindent
{\bf Corollary 1.1} \. {\it In the notation of the Main Theorem, suppose
$$
\log M \ts, \ts \ell \. = \. O\left(\frac{(\log \log N)^{1/3}}{(\log\log\log N)^{2/3}}\right)\ts.
$$
Then there is a polynomial time algorithm to compute \ts $g(\la,\mu,\nu)$. }

\medskip

The proofs are based on two main tools.  The first is the \emph{Reduction Lemma}
(Lemma~\ref{l:reduction}), which implies that when $\nu_2$ is small,
we either immediately have $g(\la,\mu,\nu)=0$, or else there are
partitions $\shortp{\la}$, $\shortp{\mu}$, $\shortp{\nu}$ of size
$O(\ell^3\nu_2)$, such that
$g(\la,\mu,\nu)=g(\shortp{\la},\shortp{\mu},\shortp{\nu})$.
In other words, we reduce the problem from binary input to
unary input, and apply Lemma~\ref{prop:g_poly} to the latter case.

The second tool is the \emph{Main Lemma}~\ref{prop:g_poly} mentioned above,
which gives an effective bound on the complexity
of computing Kronecker coefficients without any restrictions on~$M$.
It coincides with the Main Theorem~\ref{thm:nu_bounded}
when~$M=N$, and  states that the Kronecker coefficients can be
computed in time $\Poly\bigl((\ell\log N)^{\ell^3\log\ell}\bigr)$.
This is achieved by separating the algebraic and complexity parts;
the latter is reduced to counting integer points in certain $3$-way
statistical tables via Barvinok's algorithm
(see $\S$\ref{ss:barv} and~$\S$\ref{ss:fin-barv}).

\smallskip

The rest of this paper is structured as follows.  We begin
with basic definitions in Section~\ref{s:def} and proceed to
state our new complexity results in Section~\ref{s:complexity}.
In Section~\ref{s:hooks}, we discuss Blasiak's combinatorial
interpretation and its implications.  This section is largely
separate from the rest of the paper and uses some background
in Algebraic Combinatorics.

The main results of this paper, notably the Main Lemma and the
Reduction Lemma are proved
in Section~\ref{s:complexity_kp}.  We follow with two
short sections~\ref{ss:fixed_length} and~\ref{s:char}
discussing the case of bounded~$\ell$ and the
complexity of computing the characters of~$S_n$,
respectively.
Namely,  we prove that the problem of deciding
whether $\chi^{\la}[\nu]=0$ is \NP--hard, extending earlier
easy results by Hepler~\cite{Hel} (see~$\S$\ref{ss:fin-char}
for the connection with Kronecker coefficients).  We conclude with final
remarks and open problems in Section~\ref{s:final}.

\bigskip

\section{Definitions and background}\label{s:def}

We briefly remind the reader of basic definitions, standard notations
and several claims which will be used throughout the paper.
For more background on the representation theory of the symmetric
group and related combinatorics, see e.g.~\cite{Mac,Sag,Sta}.

\subsection{Partitions and characters}\label{ss:def-part}
Let $\la=(\la_1,\la_2,\ldots) \vdash n$ be a partition of~$n$,
and let $P_n$ denote the set of partitions of~$n$.
Denote by~$\la'$ the conjugate partition of $\la$.
Denote by $\ell(\la)=\la_1'$ the number of parts in~$\la$.
We use Young diagram $[\la]$ to represent a partition~$\la$.
Partitions $(n-k,1^k)$ are called \emph{hooks}; partitions $(n-k,k)$
with two parts will also play a major role.
We also define the union and intersection of two partitions as the
union or intersection of their Young diagrams.  In other words,
$\pi = \la \cup \mu$ mean that $\pi_i = \max(\la_i,\mu_i)$, and
$\rho= \la \cap \mu$ means that $\rho_i= \min(\la_i,\mu_i)$.
Denote by $\la+\mu$ the partition $(\la_1+\mu_1,\la_2+\mu_2,\ldots)$.

We denote by $\chi^\la$, $\la\vdash n$, the irreducible character of the
symmetric group~$S_n$, and $\chi^\la[\mu]$ be its value $\chi^\la(u)$
on any permutation~$u$ of cycle type~$\mu$.
For a general character~$\eta$, the multiplicity of~$\chi^\la$ in $\eta$
is given by the scalar product:
$$
c(\chi^\la,\eta) \. = \. \frac{1}{n!} \. \sum_{u\in S_n}
\. \chi^\la(u) \ts \eta(u)\ts.
$$
We use \. ``$\sign$'' \. to denote character corresponding to partition~$(1^n)$.

The characters can be computed by the \emph{Murnaghan--Nakayama rule}
(see e.g.~\cite{Sag,Sta}).  Briefly, it says that
$$\chi^\la[\mu] \. = \. \sum_{B \ts \in\ts \MN^\la_\mu} \. (-1)^{ht(B)-\ell(\mu)},$$
where $\MN^\la_\mu$ is the set of all border-strip tableaux of shape $\lambda$ and type $\mu$ and $ht(B)$ is the sum of the number of rows (height) in each border-strip of~$B$. A \emph{border-strip} is a skew connected Young diagram which does not contain a $2\times 2$ square of boxes. A \emph{border-strip tableaux} of shape $\la$ and type $\mu$ is a filling of the Young diagram of $\la$ with $\mu_1$ integers~1, $\mu_2$ integers~2, etc., such that the entries along each row and down column are weakly increasing, and such that all squares with the same number form a border-strip.

\smallskip

\subsection{Kronecker coefficients}\label{ss:def-kron}
We use $\chi \otimes \eta$ to denote the tensor product of characters.
The \emph{Kronecker coefficients} $g(\la,\mu,\nu)$, where $\la,\mu,\nu \vdash n$
are given by
$$
\chi^\la \otimes \chi^\mu \, = \, \sum_{\nu\vdash n} \. g(\la,\mu,\nu) \ts \chi^\nu\ts.
$$
It is well known that
$$
g(\la,\mu,\nu) \, = \, \frac{1}{n!} \. \sum_{\omega \in S_n} \. \chi^\la(\omega)\chi^\mu(\omega)\chi^\nu(\omega)\ts.
$$
This implies that Kronecker coefficients have full $S_3$ group of symmetry:
$$
g(\la,\mu,\nu) \, = \, g(\mu,\la,\nu) \, = \, g(\la,\nu,\mu) \, = \, \ldots 
$$
In addition, recall that $\chi^\la \otimes \sign = \chi^{\la'}$.  This implies
$$
g(\la,\mu,\nu) \, = \, g(\la',\mu',\nu) \, = \, g(\la',\mu,\nu') \, = \, g(\la,\mu',\nu')\ts.
$$

\smallskip

\subsection{Symmetric functions}\label{ss:def-LR}
We denote by $h_n$ and $h_\la=h_{\la_1}h_{\la_2}\cdots$
the \emph{homogeneous} symmetric functions, and by $s_\la$
the \emph{Schur functions} (see e.g.~\cite{Mac,Sta}).
The \emph{Littlewood--Richardson}~(LR) \emph{coefficients}
are denoted by $\LR(\la,\mu,\nu) = c^\la_{\mu,\nu}$, where
\. $|\la|\ts =\ts |\mu|+|\nu| \ts = \ts n$.  They are given by
$$
s_\mu \ts \cdot \ts s_\nu \, = \, \sum_{\la\vdash n} \. c^\la_{\mu,\nu} \ts s_\la\..
$$
The integers $c^\la_{\mu,\nu}$ have a combinatorial interpretation in term
of certain semistandard Young tableaux (see~\cite{Sag,Sta}) and \emph{BZ triangles}
(see e.g.~\cite{PV}).

Define the \emph{Kronecker product} of symmetric functions as follows:
$$
s_\mu \ts * \ts s_\nu \, = \, \sum_{\la\vdash n}  \. g(\la,\mu,\nu) \. s_\la
\..
$$

The following \emph{Littlewood's identity} (see~\cite{Lit}) is crucial for our study:
\begin{equation}\label{Littlewood}
s_\la * (s_\tau s_\theta) \. = \.
\sum_{\alpha \vdash |\tau|, \ts \beta \vdash |\theta|} \. c^\la_{\alpha \beta} (s_\alpha*s_\tau)(s_\beta*s_\theta)\..
\end{equation}
We also need the \emph{generalized Cauchy's identity} (see~\cite[Ex I.7.10]{Mac} and~\cite[Ex 7.78]{Sta})

\begin{align}\label{extended_identity}
\sum_{\lambda,\mu,\nu} \. g(\lambda,\mu,\nu)\ts s_{\lambda}(x)\ts s_{\mu}(y)\ts s_{\nu}(z) \, = \,
\prod_{i,j,\ell} \. \frac{1}{1-x_iy_jz_{\ell}}\,.
\end{align}
Given a power series $F=a_0 + a_1t+a_2t^2+\ldots$\ts, denote by $[t^i]\ts F$ the coefficient~$a_i$.  Similarly,
when~$F$ is a symmetric function and $A$ is a Schur function, denote by $[A]\ts F$ the coefficient of~$A$
in the expansion of~$F$ in the linear basis of Schur functions. By a slight abuse of notation, we use
$[A]\ts F$ for other bases of symmetric functions as well.


\subsection{Semigroup property}\label{ss:semigroup}

The triples $(\lambda,\mu,\nu)$ for which $g(\lambda,\mu,\nu)>0$ form a semigroup in the following sense.

\begin{thm}[Semigroup property]  \label{t:semi}
Suppose $\la,\mu,\nu,\al,\be,\ga$ are partitions of~$n$, such that \\
$\ts g(\la,\mu,\nu) >0\ts $ and $\ts g(\al,\be,\ga)> 0$. Then
$\ts g(\la+\al,\mu+\be,\nu+\ga)\ts >\ts 0$.
\end{thm}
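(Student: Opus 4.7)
The plan is to leverage the representation-theoretic interpretation encoded in the generalized Cauchy identity~(\ref{extended_identity}). Choose an integer $d$ at least $\max\bigl(\ell(\la),\ell(\mu),\ell(\nu),\ell(\al),\ell(\be),\ell(\ga)\bigr)$ (e.g.\ $d=n$ always works), and consider the polynomial ring $R = S(\cc^d \otimes \cc^d \otimes \cc^d) \cong \cc[x_{ijk}]_{1\le i,j,k\le d}$ equipped with the natural $GL_d \times GL_d \times GL_d$-action. Since the monomials $\prod x_{ijk}^{n_{ijk}}$ form a weight basis, the tri-graded $GL$-character of $R$ is exactly the right-hand side of~(\ref{extended_identity}). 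Matching with the left-hand side yields the isotypic decomposition
\[
R \, \cong \, \bigoplus_{\la,\mu,\nu} \. g(\la,\mu,\nu) \. \bigl(W_\la \otimes W_\mu \otimes W_\nu\bigr),
\]
where $W_\pi$ denotes the irreducible polynomial $GL_d$-module of highest weight~$\pi$. Consequently $g(\la,\mu,\nu)$ equals the dimension of the subspace $R^U[(\la,\mu,\nu)] \ssu R$ of simultaneous highest-weight vectors of tri-weight $(\la,\mu,\nu)$, where $U \ssu GL_d \times GL_d \times GL_d$ is the product of upper-unitriangular subgroups.

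Given $g(\la,\mu,\nu)>0$ and $g(\al,\be,\ga)>0$, I would pick nonzero vectors $v \in R^U[(\la,\mu,\nu)]$ and $w \in R^U[(\al,\be,\ga)]$. Because $U$ and the diagonal torus act by algebra automorphisms, $R^U$ is a subalgebra of $R$ and the weight grading is compatible with multiplication; hence $vw \in R^U[(\la+\al,\ts \mu+\be,\ts \nu+\ga)]$. The key observation is that $vw \ne 0$, which is immediate because $R$ is a polynomial ring, hence an integral domain. This produces a nonzero element in the required simultaneous highest-weight space, so $g(\la+\al,\mu+\be,\nu+\ga) \ge 1$, as claimed.

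The only conceptual step is the translation from positivity of Kronecker coefficients into the existence of nonzero simultaneous highest-weight vectors in $R$ via~(\ref{extended_identity}); once that is in place, the semigroup property collapses to the fact that a product of two nonzero polynomials is nonzero. There is no substantive obstacle, and essentially the same template proves the analogous semigroup property for LR~coefficients (by replacing $\cc^d\otimes\cc^d\otimes\cc^d$ with $\cc^d\otimes\cc^d$ and its Cauchy identity), as well as for general $GL$-representation multiplicities.
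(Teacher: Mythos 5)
Your argument is correct: via the generalized Cauchy identity~\eqref{extended_identity}, $g(\la,\mu,\nu)$ is the multiplicity of $W_\la\otimes W_\mu\otimes W_\nu$ in $S(\cc^d\otimes\cc^d\otimes\cc^d)$, i.e.\ the dimension of the corresponding simultaneous highest-weight space, and multiplying two such highest-weight vectors inside this polynomial ring (an integral domain, with $U$-invariance and additivity of weights preserved) gives a nonzero vector of weight $(\la+\al,\mu+\be,\nu+\ga)$, hence positivity. The paper does not prove Theorem~\ref{t:semi} itself but cites~\cite{CHM}, and your proof is essentially the standard highest-weight-vector argument underlying that reference, so it matches the intended route rather than offering a genuinely different one.
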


This leads to the following definition of the \emph{Kronecker semigroup}.
Let $\SM_\ell$ denote the set of triples of partitions $(\la,\mu,\nu)$ written as vectors
$$
(\la_1,\ldots,\la_\ell, \. \mu_1,\ldots,\mu_\ell, \. \nu_1,\ldots,\nu_\ell)\.,
$$
such that $g(\la,\mu,\nu)>0$.  The theorem implies that $\SM_\ell$ is a
semigroup under addition.

\begin{cor}\label{c:semi}
The Kronecker semigroup $\SM_\ell$ is finitely generated.
\end{cor}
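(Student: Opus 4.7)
The plan is to combine the generalized Cauchy identity with classical invariant-theoretic finite generation. By identity~\eqref{extended_identity}, the series $\sum g(\la,\mu,\nu)\, s_\la(x)\, s_\mu(y)\, s_\nu(z) = \prod_{i,j,k}(1 - x_iy_jz_k)^{-1}$ is precisely the formal character, under the natural action of $G = \mathrm{GL}_\ell(\cc)^{\times 3}$, of the polynomial algebra $A = \text{Sym}(V)$, where $V = \cc^\ell \otimes \cc^\ell \otimes \cc^\ell$. Thus $(\la,\mu,\nu) \in \SM_\ell$ precisely when the irreducible $G$-module of highest weight $(\la,\mu,\nu)$ occurs in $A$; equivalently, $\SM_\ell$ is the support of the $G$-isotypic decomposition of $A$.

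Next, I would pass to the subring $A^U$ of invariants under a maximal unipotent subgroup $U \subset G$. The key input is the classical theorem of Hadziev (a special case of Grosshans's theorem): whenever a reductive group acts rationally on a finitely generated commutative $\cc$-algebra, its ring of $U$-invariants is again finitely generated. Since $G$ is reductive and $A$ is a polynomial ring, this applies; I would choose $T$-homogeneous algebra generators $f_1,\ldots,f_r$ of $A^U$ (where $T \subset G$ is the maximal torus) with $T$-weights $w_1,\ldots,w_r$ lying in the dominant cone $P^+ \subseteq \zz^{3\ell}$.

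Finally, since $A^U$ is $T$-graded and the weight space $(A^U)_{(\la,\mu,\nu)}$ is nonzero if and only if $(\la,\mu,\nu) \in \SM_\ell$, every element of $\SM_\ell$ is a non-negative integer combination of $w_1,\ldots,w_r$. Hence $\{w_1,\ldots,w_r\}$ is a finite generating set for $\SM_\ell$.

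The only non-elementary step, and the main obstacle, is the finite generation of $A^U$ via Hadziev/Grosshans; everything else is bookkeeping with the Cauchy identity and $T$-weight decompositions. An alternative route would combine the rational polyhedrality of the Kronecker moment cone (after Brion and Kirwan) with Gordan's lemma, but this still requires non-combinatorial input: the semigroup property (Theorem~\ref{t:semi}) alone only gives closure under addition, which is insufficient for finite generation of an arbitrary subsemigroup of $\nn^{3\ell}$ (consider, for instance, the subsemigroup of $\nn^2$ generated by $\{(1,n): n \ge 0\}$).
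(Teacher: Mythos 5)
Your argument is correct and is essentially the proof the paper relies on: the paper gives no proof of Corollary~\ref{c:semi} itself but cites \cite{CHM}, and the argument there is exactly this identification of $\SM_\ell$ with the highest weights occurring in $\mathrm{Sym}(\cc^\ell\otimes\cc^\ell\otimes\cc^\ell)$ (via the generalized Cauchy identity) combined with finite generation of the ring of $U$-invariants by Hadziev/Grosshans. Your closing caveat is also well taken: the semigroup property of Theorem~\ref{t:semi} alone cannot give finite generation, so the invariant-theoretic input is genuinely necessary.
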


Both results are proved in~\cite{CHM} (see also $\S$\ref{ss:fin-semi}
and~\cite[$\S$4.4]{Ike}).
We use them crucially in Section~\ref{ss:fixed_length}.

\subsection{Barvinok's algorithm}\label{ss:barv}

Let $P \ssu \rr^d$ be a convex polytope given by a system of linear
equations and inequalities over integers.  Denote by $L$ the size of
the input.

\begin{thm}[Barvinok]  \label{t:barv}
For every fixed~$d$, there is a polynomial algorithm computing
the number of integer points in~$P$.  Furthermore, for general~$d$,
the algorithm works in $L^{O(d\log d)}$ time.
\end{thm}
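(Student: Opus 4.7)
The plan is to work with generating functions of lattice points. For a rational polytope $Q \subset \rr^d$, define
\[
f(Q;x) \. = \. \sum_{m \ts \in \ts Q\cap \zz^d} x^m,
\]
which, when $Q$ is line-free, extends to a rational function of $x=(x_1,\ldots,x_d)$. Counting the lattice points in~$P$ amounts to evaluating $f(P;x)$ at $x=(1,\ldots,1)$, with care taken at the apparent singularities.

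First, apply Brion's theorem to write $f(P;x) = \sum_v f(K_v;x)$, where the sum is over vertices~$v$ of~$P$ and $K_v$ is the supporting (tangent) cone at~$v$. Each $K_v$ is then triangulated into simplicial cones of full dimension, which can be done in polynomial time for fixed~$d$.

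The heart of the algorithm is Barvinok's signed decomposition of simplicial cones. A simplicial cone $K$ with primitive generators $u_1,\ldots,u_d$ has index $\ind(K) = |\det(u_1,\ldots,u_d)|$, and when $\ind(K)=1$ one has
\[
f(K;x) \. = \. \frac{x^{g(K)}}{\prod_{i=1}^d (1-x^{u_i})}
\]
for an explicit $g(K)\in \zz^d$. Using a short lattice vector (found by lattice basis reduction) inside the fundamental parallelepiped of~$K$, one writes $f(K;x)$ as a signed combination of $f(K_i;x)$ for simplicial cones $K_i$ with $\ind(K_i) \le \ind(K)^{(d-1)/d}$, modulo lower-dimensional boundary terms. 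Iterating this reduction produces only unimodular cones after $O(d \log\log \ind(K))$ levels of $d$-fold branching, for a total of $d^{O(d\log\log \ind(K))} = L^{O(d\log d)}$ terms, since $\ind(K) \le 2^{O(L)}$. For fixed~$d$ this bound is polynomial in~$L$.

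Finally, evaluation of $\sum_i \epsilon_i f(K_i;x)$ at $x=(1,\ldots,1)$ is carried out by substituting $x_i = e^{\tau\xi_i}$ for a generic $\xi\in\qqq^d$ and extracting the constant coefficient of the resulting Laurent expansion in~$\tau$; the polar parts from different unimodular cones cancel, and the remaining constant term is computable symbolically in polynomial time. The main obstacle is making the signed decomposition effective: the existence of a sufficiently short lattice vector is guaranteed by Minkowski's theorem, but to keep the algorithm polynomial one must replace this with a polynomial-time LLL-type reduction, and combine its approximation bound with careful accounting of the recursion depth and the arithmetic size of the rational expressions produced at each stage to arrive at the claimed $L^{O(d\log d)}$ running time.
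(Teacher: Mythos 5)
This statement is not proved in the paper at all: Theorem~\ref{t:barv} is quoted as Barvinok's theorem, with the proof deferred to the references \cite{Bar,Bar-alg} (and the $L^{O(d\log d)}$ exponent justified only by a footnote citing personal communication). So there is no internal argument to compare against; what you have written is a reconstruction of the standard published proof, and as an outline it is essentially right: Brion's theorem, triangulation of tangent cones, Barvinok's signed decomposition driven by a short vector whose existence Minkowski's theorem guarantees (index dropping from $\ind(K)$ to $\ind(K)^{(d-1)/d}$, recursion depth $O(d\log\log\ind)$, hence $d^{O(d\log\log\ind)}=L^{O(d\log d)}$ unimodular cones), and evaluation at $x=(1,\ldots,1)$ by the substitution $x_i=e^{\tau\xi_i}$ and constant-term extraction.

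Two caveats keep this from being a complete proof rather than a sketch. First, you dismiss the ``lower-dimensional boundary terms'' in the signed decomposition too quickly: tracking them directly in the primal by inclusion--exclusion is exactly what one must avoid, since the bookkeeping multiplies the number of terms. The standard remedy is to polarize -- perform the entire signed decomposition on the dual cones, where lower-dimensional pieces may simply be discarded (after dualizing back they become cones containing lines, whose generating functions vanish), and only then dualize the resulting unimodular cones. Without this (or an equivalent device), the recursion as you describe it does not obviously stay within the claimed bound. Second, you explicitly defer the replacement of Minkowski's theorem by an LLL-type computation and the control of the bit-size of the rational data through $O(d\log\log\ind)$ levels of recursion; these are precisely the points where the exponent $O(d\log d)$ (as opposed to the older $O(d^2)$-type accounting the paper mentions) is won or lost, so naming them as ``the main obstacle'' is accurate but leaves the quantitative claim of the theorem unverified as written.
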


The original algorithm by Barvinok required $L^{O(d^2)}$ time,
which was subsequently reduced to that in the theorem.  We refer
to~\cite{Bar,Bar-alg} for the proof, detailed surveys and
further references (see also~\cite{DHTY,DK}).\footnote{Occasionally,
this complexity is reported as~$L^{O(d)}$, but it seems a more
careful accounting gives the bound as in the theorem (A.~Barvinok,
personal communication). }

\bigskip

\section{Complexity problems}\label{s:complexity}

\subsection{Decision problems}\label{ss:decision_problems}
We are interested in deciding whether Kronecker coefficients
$g(\la,\mu,\nu)$ are strictly positive.


\bigskip

\noindent
{\sc Positivity of Kronecker coefficients (\KP):}

\smallskip

\noindent
{\bf Input:} \, Integers $N, \ell$, partitions $\la=(\la_1,\ldots,\la_\ell)$, $\mu=(\mu_1,\ldots,\mu_\ell)$,

\noindent
 $\nu=(\nu_1,\ldots,\nu_\ell)$, where $0\le \la_i, \mu_i, \nu_i \le N$, and $|\la|=|\mu|=|\nu|$.

\smallskip

\noindent
{\bf Decide:} \, whether $g(\la,\mu,\nu)>0$.

\bigskip

Recall that the two ways to present the input: in \emph{binary} and in \emph{unary}.
The difference is in the input size, denoted $\size(\la,\mu,\nu)$:  in the binary case
we have  $\size(\la,\mu,\nu) = \Theta(\ell \log N)$, and in the unary case
$\size(\la,\mu,\nu) = \Theta(\ell N)$.   Throughout the paper we
assume the input is in binary, unless specified otherwise.
The problem then becomes a well known conjecture:

\begin{conj}[Mulmuley]\label{conj:decision}
{\KP}~$\in$~{\P}.
\end{conj}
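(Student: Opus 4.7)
Proving Mulmuley's conjecture in full generality would resolve one of the most tantalizing open problems in algebraic combinatorics, so any plan must be read as exploratory. The natural strategy, building on the Main Theorem~\ref{thm:nu_bounded} and the semigroup machinery of $\S$\ref{ss:semigroup}, proceeds in three broad stages, each with a well-understood obstruction.

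First, one would try to reduce the problem to the case of bounded second parts. The Reduction Lemma~\ref{l:reduction} already shows that if $\nu_2$ is small one can replace the input by partitions of polynomial size and then invoke the Main Lemma~\ref{prop:g_poly}. The plan is to extend this: by the $S_3$ symmetry of $g(\la,\mu,\nu)$ one may assume $\nu_2 \le \min(\la_2,\mu_2)$, but this is still not enough since $\nu_2$ may be as large as $N/2$. One hopes to produce a finer reduction collapsing the input to a problem on partitions whose parts, after subtracting suitable ``stable columns'', have size polynomial in~$\ell$. Stability phenomena, in particular Murnaghan's stability of reduced Kronecker coefficients, suggest that the relevant positivity information is concentrated near the tops of the partitions and should admit such a reduction.

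Second, one must handle unbounded~$\ell$. The Barvinok-based argument of the Main Lemma pays an unavoidable $(\ell\log N)^{O(\ell^3\log\ell)}$ factor because the ambient polytope has dimension growing with~$\ell$. To eliminate this, one would like to exploit the finite generation of the Kronecker semigroup~$\SM_\ell$ (Corollary~\ref{c:semi}): positivity of $(\la,\mu,\nu)$ should be equivalent to the existence of a nonnegative rational decomposition in a generating set, which in principle can be tested by linear programming and hence lies in~\P. The obstruction is that the best known bounds on the size and number of generators of $\SM_\ell$ grow badly with~$\ell$, so this reformulation alone does not obviously yield a polynomial-time algorithm; progress would require either a structural description of a small subset of generators sufficient for positivity, or a Carath\'eodory-type theorem controlling the support of positive decompositions.

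The third and deepest obstacle is that we do not even know $\KP \in \NP$: no polynomial-size certificate of positivity of $g(\la,\mu,\nu)$ is currently known. Any such certificate, whether extracted from a version of Blasiak's combinatorial interpretation discussed in~$\S$\ref{s:hooks} or from a hypothetical BZ-type polytope analogous to the one controlling LR coefficients, would already be a major breakthrough and is essentially the long-sought combinatorial interpretation of Kronecker coefficients. The hard part is conceptual: Kronecker coefficients vanish for reasons currently visible only through character-theoretic cancellation, and no existing framework converts such cancellations into polynomial-time decidable conditions. My plan therefore aims first at the weaker target $\KP \in \NP$, then at upgrading to~\P\ via the semigroup and Barvinok ideas above; I expect the $\NP$ step to be the true bottleneck, and regard the present paper as providing the strongest unconditional upper bounds currently available.
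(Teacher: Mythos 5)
This statement is Mulmuley's conjecture, which the paper records as an open problem and does not prove; it only establishes special cases (\KP$(\ell)\in$~\P\ for fixed~$\ell$ via the semigroup property and fixed-dimension integer programming, the hook case via Blasiak's rule, and the bounded-$\nu_2$ regime via the Reduction Lemma). You correctly treat the conjecture as open and offer a plan rather than a proof, so there is no paper argument to compare against and your proposal cannot be judged as a proof attempt: it does not, and does not claim to, establish \KP~$\in$~\P.

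One concrete flaw in your second stage is worth flagging. You suggest that, given finite generation of the Kronecker semigroup $\SM_\ell$, positivity ``should be equivalent to the existence of a nonnegative rational decomposition in a generating set, which in principle can be tested by linear programming.'' That equivalence is false in general: membership in the semigroup generated by $v_1,\ldots,v_k$ requires a nonnegative \emph{integer} solution of $AY=X$, and replacing this by rational feasibility is exactly a saturation-type assumption. For LR~coefficients this works because of the Knutson--Tao saturation theorem, but the paper points out (\S\ref{s:final}, citing~\cite{Kir,BOR}) that saturation fails for Kronecker coefficients, so the LP relaxation can be feasible while $g(\la,\mu,\nu)=0$. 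This is precisely why the paper's Theorem~\ref{thm:fixed} must invoke Lenstra/Eisenbrand-type results on integer programming in fixed dimension, which give polynomiality only for fixed~$\ell$ and with a constant depending on the (uncontrolled, possibly non-computable) number of generators $k(\ell)$. Your assessment that the real bottlenecks are the unbounded-$\ell$ case and the absence of any known \NP\ certificate is accurate and consistent with the paper's discussion, but the route through rational decompositions would need the (false) saturation property or some substitute before it could contribute to a proof.
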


Note that it is not even known whether \KP~$\in$~\NP,
except in a few special cases, such as when one of the
input partitions is a hook.
This case is considered in Section~\ref{s:hooks}. Here we consider various subproblems like the case when $\ell$ is fixed, denoted \KP($\ell$), and the case when one partition is a hook, denoted \KP(hook).

%

\subsection{Counting problems.}
There are analogous problems about computing the
exact values of the coefficients mentioned above.
\smallskip

\noindent
{\sc Kronecker coefficients (\KN):}

\smallskip

\noindent
{\bf Input:} \, Integers $N, \ell$, partitions $\la=(\la_1,\ldots,\la_\ell)$, $\mu=(\mu_1,\ldots,\mu_\ell)$,

\noindent
 $\nu=(\nu_1,\ldots,\nu_\ell)$, where $0\le \la_i, \mu_i, \nu_i \le N$, and $|\la|=|\mu|=|\nu|$.

\smallskip

\noindent
{\bf Compute:} \,the Kronecker coefficient $g(\la,\mu,\nu)$.
\smallskip

Analogously to the \KP \ case, we consider also the subproblems \KN($\ell$) when $\ell$ is fixed and \KN(hook) when one partition is a hook.

\medskip

The main complexity result in the area is the following recent theorem.

\begin{thm}[B\"urgisser--Ikenmeyer]\label{thm:gapp}
\KN~$\in$~\GP.
\end{thm}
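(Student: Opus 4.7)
The plan is to exhibit $g(\la,\mu,\nu)$ as the gap of a nondeterministic polynomial-time Turing machine, by expanding it into a signed sum, over polynomially many choice bits, of counts of $3$-dimensional nonnegative integer contingency tables, each count being a $\SP$ quantity. Since $\GP$ is closed under exponential sums of $\SP$ functions weighted by polynomial-time computable signs in $\{-1,0,+1\}$, this will place $\KN\in\GP$.

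The first step is to apply the Jacobi--Trudi identity to linearize each irreducible character. With $\rho=(\ell-1,\ldots,0)$ and $\eta^\be$ denoting the character of $S_n$ induced from the trivial representation of the Young subgroup $S_{\be_1}\times\cdots\times S_{\be_\ell}$ (under the convention $\eta^\be=0$ when some $\be_i<0$), Jacobi--Trudi yields
$$
\chi^{\la} \. = \. \sum_{\sigma\in S_\ell}\.\sgn(\sigma)\ts\eta^{\la+\rho-\sigma(\rho)}\ts,
$$
and similarly for $\chi^\mu$ and $\chi^\nu$. Substituting all three into $g(\la,\mu,\nu)=\<\chi^\la\chi^\mu,\chi^\nu\>$ rewrites the Kronecker coefficient as a signed sum over $(\sigma_1,\sigma_2,\sigma_3)\in S_\ell^3$ of triple inner products $\<\eta^{\al_1}\eta^{\al_2},\eta^{\al_3}\>$, where $\al_1,\al_2,\al_3$ are explicit compositions read off from Jacobi--Trudi applied to $\la,\mu,\nu$ respectively. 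The index set $S_\ell^3$ has bit-size $O(\ell\log\ell)$, polynomial in the binary input size $O(\ell\log N)$.

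The second step identifies each inner product as a $\SP$ count. By Burnside's lemma applied to the diagonal $S_n$-action on $(S_n/S_{\al_1})\times(S_n/S_{\al_2})\times(S_n/S_{\al_3})$, the inner product $\<\eta^{\al_1}\eta^{\al_2},\eta^{\al_3}\>$ equals the number of $S_n$-orbits, which biject classically with nonnegative integer $3$-way arrays $(M_{ijk})\in\zz_{\ge0}^{\ell^3}$ having row, column, and depth marginals $\al_1,\al_2,\al_3$. Each such array is encoded in $O(\ell^3\log N)$ bits and its marginals verified in polynomial time, so the count lies in~$\SP$.

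Finally, the ingredients are assembled into a single $\GP$ machine: nondeterministically guess $(\sigma_1,\sigma_2,\sigma_3)\in S_\ell^3$, compute the compositions $\al_i$, guess a candidate array $(M_{ijk})\in\zz_{\ge0}^{\ell^3}$, and output weight $\sgn(\sigma_1\sigma_2\sigma_3)\in\{\pm 1\}$ iff every $\al_i$ is a nonnegative composition and the array has the correct marginals; otherwise output $0$. The resulting gap is $g(\la,\mu,\nu)$, placing $\KN\in\GP$. The step demanding the most care, though not posing a deep obstacle, is verifying the $S_n$-orbit--to--contingency-table bijection with the correct conventions when some $\al_i$ has zero entries or has its parts permuted; the rest is bookkeeping on bit-sizes to invoke the closure properties of~$\GP$.
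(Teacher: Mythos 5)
Your proposal is correct and follows essentially the paper's own route: you arrive at precisely the identity \eqref{gapp_eq}, expressing $g(\la,\mu,\nu)$ as a signed sum over triples $(\sigma_1,\sigma_2,\sigma_3)\in S_\ell^3$ of counts of $3$-way contingency arrays whose marginals are determined by $\la,\mu,\nu$ and the permutations, the only difference being that you derive this identity via Jacobi--Trudi for induced characters plus Burnside's lemma, whereas the paper extracts it from the generalized Cauchy identity \eqref{extended_identity} via the bialternant (Vandermonde) formula. The concluding step is identical: grouping the terms by $\sgn(\sigma_1\sigma_2\sigma_3)=\pm1$ exhibits $g(\la,\mu,\nu)$ as a difference of two \SP\ functions (each counting polynomially verifiable pairs of a permutation triple and an array), hence \KN~$\in$~\GP.
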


Here \GP~is a class of functions obtained as a difference
of two functions in~\SP.  We give a different proof of the theorem
in Section~\ref{s:complexity_kp} (cf.~\cite{CDW}).
We conclude with two more conjectures by Mulmuley~\cite{Mul1}.

\begin{conj}[Mulmuley]  \label{conj:sp}
\KN~$\in$~\SP.
\end{conj}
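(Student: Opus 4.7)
The goal is to exhibit, for each triple $(\lambda,\mu,\nu)$, a polynomial-time-recognizable set $\mathcal{C}_{\lambda,\mu,\nu}$ of combinatorial objects whose cardinality equals $g(\lambda,\mu,\nu)$. My plan is to attack this via Littlewood's identity~\eqref{Littlewood} combined with Blasiak's combinatorial rule for the hook case (Section~\ref{s:hooks}, used below only as a black box) as an inductive base. Peeling off rows or hook-shaped pieces of one of the partitions, say $\nu = \tau \cup \theta$ in the sense of~\eqref{Littlewood} applied to $s_\tau s_\theta$, writes $g(\lambda,\mu,\nu)$ as a nonnegative LR-weighted sum of products of two Kronecker coefficients with strictly simpler third arguments. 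Since $\LR \in \SP$ and hooks form a $\SP$ base case by Blasiak, an induction along a sufficiently fine shape decomposition would place $\KN \in \SP$ provided the decomposition at each step is genuinely positive (no sign-cancellation required).

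A second, more direct route would work from Theorem~\ref{thm:gapp}. Any proof of $\KN \in \GP$ ultimately writes $g(\lambda,\mu,\nu) = f(\lambda,\mu,\nu) - h(\lambda,\mu,\nu)$ with $f,h \in \SP$; the underlying signed objects come from the Murnaghan--Nakayama expansion of $\sum_\omega \chi^\lambda(\omega)\chi^\mu(\omega)\chi^\nu(\omega)/n!$ as a signed count of triples of border-strip tableaux with matching type. The plan here is to construct a sign-reversing involution on the subset of triples of nonmaximal sign, leaving a manifestly nonnegative family of fixed points in bijection with the desired $\mathcal{C}_{\lambda,\mu,\nu}$. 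A parallel, more geometric attempt would use the Reduction Lemma (Lemma~\ref{l:reduction}) to restrict attention to $|\lambda|,|\mu|,|\nu| = O(\ell^{3}\nu_2)$; if one could build a nonnegative integral polytope whose lattice points naturally index $g(\lambda,\mu,\nu)$, then $\SP$-membership would follow from the standard unary-to-$\SP$ reduction (Barvinok's algorithm itself only yields $\GP$, since it uses signed short generating functions, and is therefore insufficient without an added positive model).

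The main obstacle is the one that has kept this conjecture open since Murnaghan: at present there is no known positive combinatorial model for $g(\lambda,\mu,\nu)$, and the two approaches above each founder at precisely the point where positivity must appear. In the Littlewood-iteration approach, a decomposition of $\nu$ as $\tau\cup\theta$ induces an iteration on partitions of $\tau$ and $\theta$ into $\alpha,\beta$; the base hypotheses from Blasiak apply only when $\tau$ or $\theta$ is itself a hook, and maintaining this restriction through the recursion is nontrivial. In the involution approach, the difficulty is that even for very small examples the signed border-strip triples do not admit any obvious pairing, and the few existing partial results (for hooks, two-row shapes) proceed by entirely different mechanisms. My expectation is that some genuinely new combinatorial input---most likely a generalization of Blasiak's rule beyond hooks---would be needed to make either route go through, and furnishing such an input is essentially the content of the conjecture.
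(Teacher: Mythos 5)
There is nothing to check here in the usual sense: the statement you were given is Conjecture~\ref{conj:sp}, which the paper attributes to Mulmuley and explicitly leaves open --- it is equivalent to the long-standing problem of finding a positive combinatorial interpretation for $g(\la,\mu,\nu)$, and the paper proves only weaker results in its direction (\GP-membership, Theorem~\ref{thm:gapp}, and the hook case, Theorem~\ref{thm:hooks}). Your submission is, by your own closing admission, not a proof: each route you sketch stalls precisely where positivity must be produced, and you correctly conclude that supplying that ingredient is ``essentially the content of the conjecture.'' That self-assessment is accurate, but it means the gap is total --- no proof of \KN~$\in$~\SP~is given, and none is known.

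On the specifics of your sketches: in the Littlewood-identity route, the quantity $[s_\mu]\bigl(s_\la * (s_{n-r}s_\zeta)\bigr)$ is indeed a nonnegative combination of smaller Kronecker coefficients, but to isolate $g(\la,\mu,\nu)$ one must subtract the Pieri correction terms exactly as in~\eqref{mu_pieri}, so the recursion is inherently signed rather than a ``nonnegative LR-weighted sum''; this is the same cancellation problem you face in the other routes. Also, the paper's \GP-membership comes from the signed contingency-array identity~\eqref{gapp_eq} (a sum over triples of permutations in $S_\ell$), not from Murnaghan--Nakayama border-strip triples, though a sign-reversing involution is equally unavailable in either model. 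Your remark that Barvinok's algorithm cannot by itself yield \SP-membership is correct and consistent with Conjecture~\ref{conj:sp-pol} remaining open. So as a survey of obstacles your text is sound, but the statement stands unproved, as the paper intends.
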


\begin{conj}[Mulmuley]  \label{conj:sp-pol}
The Kronecker coefficient $g(\la,\mu,\nu)$ is equal to
the number of integer points in convex polytope $P(\la,\mu,\nu)$
with a polynomial description.
\end{conj}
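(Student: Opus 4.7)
My starting point is the generalized Cauchy identity~\eqref{extended_identity}. Expanding the right-hand side geometrically gives
\[
\prod_{i,j,k}(1-x_iy_jz_k)^{-1} \, = \, \sum_{T}\. x^{r(T)}\ts y^{c(T)}\ts z^{s(T)}\ts,
\]
where $T$ ranges over $3$-way nonnegative integer arrays with row-, column-, and tube-marginals $r(T),c(T),s(T)$. The set of arrays with fixed marginals $(\al,\be,\ga)$ is exactly the integer-point set of the $3$-dimensional transportation polytope $\cT(\al,\be,\ga)$, which already admits a polynomial-size inequality description and is the very polytope underlying Barvinok's algorithm as used in Main Lemma~\ref{prop:g_poly}. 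Equating monomial coefficients on both sides yields
\[
N(\al,\be,\ga)\, :=\, |\cT(\al,\be,\ga)\cap\zz^{\ell^3}|\,=\, \sum_{\la,\mu,\nu\vdash n}\. g(\la,\mu,\nu)\ts K_{\la,\al}\ts K_{\mu,\be}\ts K_{\nu,\ga}\ts,
\]
and inverting the Kostka matrix on partitions expresses $g$ as a \emph{signed} lattice-point count:
\[
g(\la,\mu,\nu)\,=\, \sum_{\al,\be,\ga\vdash n}\. N(\al,\be,\ga)\ts K^{-1}_{\la,\al}\ts K^{-1}_{\mu,\be}\ts K^{-1}_{\nu,\ga}.
\]

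The next step of the plan is to eliminate the signs. Invoking a combinatorial model for $K^{-1}$ (for instance, via signed special rim-hook tabloids), one attaches to each table $T$ a triple of signed tabloids and tries to pair up the negatively signed configurations with positively signed ones through a weight-preserving, sign-reversing involution. The fixed points would then be cut out of $\cT(\al,\be,\ga)\times\{\text{tabloid data}\}$ by additional linear inequalities, producing a single polytope $P(\la,\mu,\nu)$ of polynomial description size whose lattice points count $g(\la,\mu,\nu)$ exactly, certifying the conjecture.

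The principal obstacle is constructing the involution. Already the Littlewood--Richardson degeneration of the problem required decades of work and several inequivalent positive polytope models---hives, Berenstein--Zelevinsky triangles, puzzles---and none of these has so far extended to arbitrary $g(\la,\mu,\nu)$; indeed, as emphasized throughout this paper, no combinatorial interpretation for Kronecker coefficients, polytopal or not, is presently known. A possible fallback is to exploit the semigroup structure (Theorem~\ref{t:semi} and Corollary~\ref{c:semi}): chamber-decompose the rational polyhedral cone $\SM_\ell$ in the spirit of Brion--Vergne and glue locally defined polytopes chamber by chamber. However, the stretching function $k\mapsto g(k\la,k\mu,k\nu)$ is only quasi-polynomial and can carry genuine periods, so $P(\la,\mu,\nu)$ cannot be a naive Ehrhart construction; arranging the chamber-wise gluing to produce inequalities of \emph{polynomially bounded} bit-size uniformly in $(\la,\mu,\nu)$ is where I expect the argument to be decided.
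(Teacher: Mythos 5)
There is no proof of this statement in the paper to compare against: it is Mulmuley's Conjecture~\ref{conj:sp-pol}, stated as an open conjecture, and it remains open. Your proposal does not close it either, and you should be clear about where it stops being a proof. The first half of your argument --- expanding the generalized Cauchy identity~\eqref{extended_identity} into $3$-way contingency arrays with fixed marginals and then inverting the Kostka matrix (equivalently, multiplying by Vandermonde factors) --- only reproduces the \emph{signed} lattice-point formula that the paper already derives in the proof of the Main Lemma~\ref{prop:g_poly}, namely equation~\eqref{gapp_eq} expressing $g(\la,\mu,\nu)$ as an alternating sum of counts $C(\al+1-\si^1,\be+1-\si^2,\ga+1-\si^3)$ over triples of permutations; the inverse-Kostka variant you describe is noted as equivalent in \S\ref{fr:comb_kp}. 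That much establishes $\KN\in\GP$ (Theorem~\ref{thm:gapp}), not the conjecture.

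The genuine gap is the step you yourself flag: the weight-preserving, sign-reversing involution whose surviving configurations are cut out by linear inequalities is not constructed, and constructing it is essentially the entire content of the conjecture. Such an involution would in particular give a positive combinatorial interpretation of $g(\la,\mu,\nu)$ and place $\KN$ in $\SP$ (Conjecture~\ref{conj:sp}), which the paper emphasizes is a major open problem; so the proposal assumes the hard part. Moreover, even granting an involution, its fixed-point set need not be the integer-point set of a single polytope with a polynomially bounded description --- combinatorial interpretations are generally not polytopal --- so the passage from ``cancellation-free model'' to ``polytope $P(\la,\mu,\nu)$'' is a second unproven leap. The fallback via the Kronecker semigroup does not rescue this: the finite generation of $\SM_\ell$ in Corollary~\ref{c:semi} is inexplicit (the paper notes there is no control over the number or size of generators), positivity information about $\SM_\ell$ says nothing about the actual values $g(\la,\mu,\nu)$, and the quasi-polynomial stretching behavior together with the failure of saturation (\S7.3) makes a uniform chamber-by-chamber polytopal description a separate open problem rather than a routine construction. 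In short, the proposal is a reasonable research plan, but both of its decisive steps are exactly the open questions, so it cannot be counted as a proof.
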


This conjecture is the counting version of Conjecture~\ref{conj:decision}.
It extends the classical result by Gelfand and Zelevinsky, expressing
LR~coefficients as the number of integer points in convex polytopes
(see e.g.~\cite{Zel})

\smallskip
The main result of the paper is the following theorem
(the proof is in Section~\ref{s:complexity_kp}).

\begin{thm}[Main Theorem] \label{thm:nu_bounded}
Consider the problem \KN, where the input is integers $N,\ell$ and  partitions
$\lambda=(\la_1,\la_2,\ldots,\la_\ell), \mu=(\mu_1,\mu_2,\ldots,\mu_\ell),\nu=(\nu_1,\ldots,\nu_\ell)$,
such that $\la_1,\mu_1,\nu_1 \leq N$ and $|\la|=|\mu|=|\nu|$.
Suppose that $\nu_2 \leq M$. Then $g(\la,\mu,\nu)$ can be computed
in time
$$O(\ell\ts\log N) \. + \. (\ell\ts \log M)^{O(\ell^3\log\ell)}.$$
\end{thm}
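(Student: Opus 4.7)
The plan is to combine two results proved later in the paper: the Reduction Lemma (Lemma~\ref{l:reduction}) and the Main Lemma (Lemma~\ref{prop:g_poly}). The Reduction Lemma lets us replace a triple $(\la,\mu,\nu)$ with $\nu_2\le M$ by a triple $(\shortp{\la},\shortp{\mu},\shortp{\nu})$ of partitions of size $O(\ell^3 M)$ with the same Kronecker coefficient, or certify immediately that $g(\la,\mu,\nu)=0$. The Main Lemma computes Kronecker coefficients in general in time $\Poly\bigl((\ell\log N)^{\ell^3\log\ell}\bigr)$. Applied to the reduced triple, this produces the desired bound.

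Concretely, the algorithm has three steps. First, read the binary input in time $O(\ell\log N)$. Second, invoke Lemma~\ref{l:reduction}: either it certifies $g(\la,\mu,\nu)=0$ and we return $0$, or it outputs $(\shortp{\la},\shortp{\mu},\shortp{\nu})$ whose parts are bounded by $O(\ell^3 M)$ and which satisfy $g(\la,\mu,\nu)=g(\shortp{\la},\shortp{\mu},\shortp{\nu})$; this step amounts to a constant number of arithmetic operations per part on integers of bit-length $O(\log N + \log(\ell M))$, so it fits within the $O(\ell\log N)$ budget. Third, apply Lemma~\ref{prop:g_poly} to the reduced triple with length $\ell$ and maximum part size $N' = O(\ell^3 M)$. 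Since $\log N' = O(\log M + \log \ell)$, the Main Lemma's running time becomes
$$
\Poly\bigl((\ell\log N')^{\ell^3\log\ell}\bigr) \, = \, (\ell\log M)^{O(\ell^3\log\ell)},
$$
after absorbing $\log \ell$ factors into the big-$O$ of the exponent. Summing the three contributions yields the claimed bound $O(\ell\log N) + (\ell\log M)^{O(\ell^3\log\ell)}$.

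The main obstacle lies not in this combination but in the two lemmas themselves. The Reduction Lemma should come from Littlewood's identity~(\ref{Littlewood}) applied so as to peel off the long first rows of $\la$, $\mu$, $\nu$, yielding a finite expansion whose contributing Kronecker coefficients involve partitions of size $O(\ell^3 M)$; the $\ell^3$ factor arises naturally from coordinating the three long first rows (and the argument should simultaneously expose, for free, the certificate $g(\la,\mu,\nu)=0$ when the first rows are incompatible with $\nu_2\le M$). The Main Lemma should reduce, via iterated Littlewood-type identities, the Kronecker coefficient to a signed sum of integer-point counts in polytopes of dimension $O(\ell^3)$, to which Barvinok's algorithm (Theorem~\ref{t:barv}) applies with the stated $L^{O(d\log d)}$ time bound. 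The architecture ``reduce to bounded input, then count integer points'' is exactly the strategy emphasized in the introduction, and establishing these two lemmas is the real technical work; their combination to deduce Theorem~\ref{thm:nu_bounded} is essentially bookkeeping.
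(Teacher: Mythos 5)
Your proposal is correct and follows essentially the same route as the paper: check the condition of the Reduction Lemma (Lemma~\ref{l:reduction}) to either certify $g(\la,\mu,\nu)=0$ or replace the triple by $(\shortp{\la},\shortp{\mu},\shortp{\nu})$ with parts bounded by $2(n-\nu_1)\ell^2 \le 2M\ell^3$, then invoke the Main Lemma~\ref{prop:g_poly} with $m=O(\ell^3 M)$, absorbing the extra $\log\ell$ factors into the exponent exactly as you describe. Your sketch of how the two lemmas themselves are proved (Littlewood's identity for the reduction, Barvinok's algorithm on $O(\ell^3)$-dimensional contingency-array polytopes for the Main Lemma) also matches the paper's strategy.
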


\medskip

\begin{cor}[\cite{CDW}]\label{cor:fixed}
We have \KP$(\ell)$~$\in$~\P~and \KN$(\ell)$~$\in$~\FP, for every fixed~$\ell$.
\end{cor}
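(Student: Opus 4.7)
The plan is to derive the corollary as an almost immediate consequence of the Main Theorem (Theorem~\ref{thm:nu_bounded}). Since $\nu_2 \le \nu_1 \le N$ always holds, we can apply the theorem with the trivial bound $M = N$, giving a runtime of
$$
O(\ell\ts\log N) \. + \. (\ell\ts \log N)^{O(\ell^3\log\ell)}.
$$
No other input is needed to invoke Theorem~\ref{thm:nu_bounded}, so there is no obstacle in the application itself.

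Next, I would observe that when $\ell$ is fixed, both the base factor $\ell$ and the exponent $O(\ell^3 \log \ell)$ are absolute constants. Consequently the runtime collapses to $(\log N)^{O(1)}$, which is polynomial in the input size $\size(\la,\mu,\nu) = \Theta(\ell \log N) = \Theta(\log N)$ (recall our convention that input is presented in binary). This immediately yields $\KN(\ell) \in \FP$.

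Finally, for the decision version, I would note that the positivity question $g(\la,\mu,\nu) > 0$ is answered by simply comparing the explicit value produced by the above algorithm against zero, which adds only negligible overhead. Therefore $\KP(\ell) \in \P$ follows from $\KN(\ell) \in \FP$.

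In short, the corollary is essentially a re-reading of the Main Theorem in the regime where $\ell$ is constant; the only thing to verify is that the exponent $O(\ell^3 \log \ell)$ indeed depends solely on $\ell$ and not on $N$, so that the expression $(\ell \log N)^{O(\ell^3 \log \ell)}$ becomes polylogarithmic in $N$ once $\ell$ is fixed. All the real work has already been carried out in the proof of Theorem~\ref{thm:nu_bounded} (via the Reduction Lemma and the Main Lemma), so there is no substantive obstacle here.
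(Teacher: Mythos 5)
Your derivation is correct and is essentially the paper's own: the corollary is stated as an immediate consequence of Theorem~\ref{thm:nu_bounded} (equivalently, of the Main Lemma~\ref{prop:g_poly} with $m=N$), since for fixed $\ell$ the bound $(\ell\log N)^{O(\ell^3\log\ell)}$ is polynomial in the binary input size $\Theta(\ell\log N)$, and positivity is decided by comparing the computed value with zero. (The paper additionally gives a separate proof of the \KP$(\ell)$ part via the semigroup property in Theorem~\ref{thm:fixed}, but that is an independent alternative, not the route you needed.)
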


 Here {\FP} is a class of functions computable in polynomial time,
a counterpart of {\P}~for decision problems. Note that
in Section~\ref{ss:fixed_length}, we use the semigroup property
to prove the first part of the corollary.

\bigskip

\section{The case of a hook}\label{s:hooks}

Here we consider separately the complexity of \KP(hook)~and \KN(hook)~when one of the partitions involved is a hook.

Let $\nu=(n-t,1^t)$ be a hook shape, and $\la,\mu\vdash n$, such that $\ell(\lambda),
\ell(\mu) \leq k$ and $\la_1,\mu_1\leq N$.  Theorem~\ref{thm:nu_bounded} implies the following result in this case.

\begin{cor}\label{cor:hooks}
Let $\nu=(n-t,1^t)$ be a hook, $\la_1,\mu_1\le N$ and $\ell(\la),\ell(\mu)\le k$.
Then $g(\la,\mu,\nu)$ can be computed in time
$$
O\left(k^2\log N \right)\. + \. k^{O(k^6\log k)} \. .
$$
\end{cor}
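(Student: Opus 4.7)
The plan is to apply Theorem~\ref{thm:nu_bounded} with the length parameter raised from $k$ to $k^2$. The critical observation is the standard length inequality for Kronecker coefficients: if $g(\la,\mu,\nu) > 0$, then $\ell(\nu) \le \ell(\la)\cdot\ell(\mu) \le k^2$. This bound follows from Schur--Weyl duality: the irreducible $S_n$-module $V_\la$ sits inside $(\cc^{\ell(\la)})^{\otimes n}$ as the $\la$-isotypic component for the commuting general linear group action, and similarly $V_\mu \ssu (\cc^{\ell(\mu)})^{\otimes n}$, whence $V_\la\otimes V_\mu \ssu (\cc^{\ell(\la)\ell(\mu)})^{\otimes n}$ as an $S_n$-submodule; only $V_\nu$ with $\ell(\nu) \le \ell(\la)\ell(\mu)$ can appear in the latter.

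First I would dispose of degenerate hooks: if $t=0$ then $g(\la,\mu,\nu) = [\la=\mu]$, and if $t=n-1$ then $g(\la,\mu,\nu) = [\la=\mu']$, both immediate. Otherwise, test whether $t+1 = \ell(\nu) > k^2$; if so, output $g(\la,\mu,\nu)=0$ at once. This test takes $O(\log n)$ time, and since $n \le k\la_1 \le kN$ this is $O(\log N + \log k)$, easily absorbed in the claimed bound.

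In the remaining case $1\le t \le k^2-1$ we have $\ell(\la),\ell(\mu),\ell(\nu) \le k^2$, and since $\nu$ is a hook, $\nu_2=1$. The largest parts are bounded by $kN$ (since $\nu_1 = n-t \le n \le kN$). Apply Theorem~\ref{thm:nu_bounded} with parameters $\ell := k^2$, $M := 1$, and $N$ replaced by $kN$. The running time is
$$O(\ell\log(kN)) \. + \. (\ell\log M)^{O(\ell^3\log\ell)} \, = \, O(k^2\log N) \. + \. k^{O(k^6\log k)},$$
which matches the stated bound. The edge case $M=1$ (which na\"ively makes $\log M=0$ in the exponent) is harmless: the bound coming from the Reduction Lemma combined with the Main Lemma is valid with $\log M$ replaced by $\max(\log M,1)$, and the resulting $(k^2)^{O(k^6\log k)}$ still fits the target $k^{O(k^6\log k)}$ contribution.

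The only conceptually nontrivial input beyond the Main Theorem itself is the length inequality $\ell(\nu)\le \ell(\la)\cdot\ell(\mu)$; this is the main point needing either a careful citation or a brief Schur--Weyl justification as sketched above. Once it is in hand, the corollary is a direct application of Theorem~\ref{thm:nu_bounded} with the parameter inflation $\ell \mapsto k^2$ and $M = 1$.
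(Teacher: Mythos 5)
Your proposal is correct and follows essentially the same route as the paper: invoke the bound $g(\la,\mu,\nu)=0$ whenever $\ell(\nu)>\ell(\la)\ell(\mu)$ (the paper cites Dvir, while you supply the standard Schur--Weyl justification), and otherwise apply Theorem~\ref{thm:nu_bounded} with $\ell=k^2$ and $M=1$. Your extra care about the $M=1$ edge case and the bound $\nu_1\le kN$ only tightens details the paper leaves implicit.
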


\begin{proof}
Recall  that when $\ell(\lambda)\ell(\mu) < \ell(\nu)$ we have $g(\lambda,\mu,\nu)=0$, see~\cite{Dvir}. Thus, when $t>k^2$ this implies  $g(\lambda,\mu,\nu)=0$. On the other hand, when the height of the hook $t \leq k^2$, apply Theorem~\ref{thm:nu_bounded} with $\ell = k^2$ and $M=1$, to obtain the result.
\end{proof}

\begin{rem}{\rm
Note that Lemma~\ref{l:contingency} and subsequently Lemma~\ref{prop:g_poly} below
can be easily modified for partitions of different lengths.  This more careful
analysis in the proofs of lemmas can reduce the exponent in Corollary~\ref{cor:hooks}
from $k^6$ to~$k^4$.  We omit this improvement for the sake of clarity.  }
\end{rem}

We use the recent combinatorial interpretation by Blasiak in~\cite{Bla}
as outlined below, to prove the following result.
\begin{thm}\label{thm:hooks}
We have \KP{\rm (hook)}~$\in$~\NP~and \KN{\rm (hook)}~$\in$~\SP.
\end{thm}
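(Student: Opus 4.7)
The plan is to use Blasiak's combinatorial interpretation from~\cite{Bla}, which expresses $g(\la,\mu,\nu)$, for $\nu = (n-k, 1^k)$ a hook, as the cardinality of an explicit set $\mathcal{B}(\la,\mu,\nu)$ of decorated (semistandard, Yamanouchi-type) tableaux. Once this is in hand, \KN(hook)~$\in$~\SP~reduces to exhibiting a polynomial-size, polynomial-time verifiable encoding of the elements of $\mathcal{B}(\la,\mu,\nu)$; the \NP~statement for \KP(hook)~then follows immediately, since a witness for positivity is simply one element of $\mathcal{B}(\la,\mu,\nu)$.

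First, I would recall Blasiak's rule in a form where each object $T \in \mathcal{B}(\la,\mu,\nu)$ consists of (i) an underlying semistandard tableau of some shape determined by $\la$ and $\mu$, with entries from an alphabet of size $O(\ell)$, where $\ell := \max(\ell(\la),\ell(\mu))$, together with (ii) auxiliary decoration data (colors, marked cells, etc.) whose total amount is controlled by the hook height $k = \ell(\nu)-1$. Here the key structural feature is that in each row the entries are weakly increasing and the alphabet is small, so the underlying semistandard tableau can be compressed to its row-content matrix $(m_{ij})$, where $m_{ij}$ is the number of cells in row $i$ with entry $j$. This encoding has size $O(\ell^2 \log N)$, hence is polynomial in the input length $\ell \log N$, even though the tableau itself has $n = |\la|$ cells, which is exponential in the input.

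Next, I would show that the defining conditions on $T$---semistandardness, the Yamanouchi (lattice-word) condition on its reverse reading word, and the color/marking constraints coming from the hook shape $(n-k,1^k)$---can all be rewritten as polynomial-time checkable conditions on the compressed data $(m_{ij})$ together with the $O(k\cdot \ell)$ bits of decoration. Semistandardness is already expressed by linear inequalities in the $m_{ij}$, and the Yamanouchi condition, while naively phrased in terms of scanning the length-$n$ reading word letter by letter, is equivalent to a polynomial-size system of inequalities comparing partial row-count prefix sums; this is the same ``compressed Yamanouchi'' trick underlying the polytopal descriptions of LR~coefficients. The hook hypothesis enters crucially here by bounding the number of decorated cells by $k$, so the extra combinatorial data needed beyond $(m_{ij})$ stays polynomial in~$\log N$ and $\ell$.

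The main obstacle I expect is precisely step two: confirming that Blasiak's full list of constraints---in particular the lattice-type conditions on the colored reading word, which are stated in~\cite{Bla} in a way designed for combinatorial proof rather than efficient verification---can be faithfully reformulated in terms of the compressed counts $(m_{ij})$ and a polynomial-size decoration. This is the step where one must check that no constraint in Blasiak's rule secretly depends on the individual identity of one of the $n$ cells, rather than on row-content aggregates. Once this compression is carried out, a non-deterministic machine guessing $(m_{ij})$ and the decoration data, verifying all constraints in time $\mathrm{Poly}(\ell\log N)$, and accepting if and only if the encoded object lies in $\mathcal{B}(\la,\mu,\nu)$, witnesses \KN(hook)~$\in$~\SP~and hence \KP(hook)~$\in$~\NP, completing the theorem.
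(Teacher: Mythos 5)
Your overall strategy is the same as the paper's: invoke Blasiak's rule for a hook, compress the witness to a row-content matrix plus a small amount of data for the $k$ barred cells, and verify the constraints in time polynomial in $\ell\log N$. But there is a genuine gap, and it is exactly the step you flag as your ``main obstacle'' and then leave unresolved. Blasiak's rule does not only require that $A$ be a small-bar semistandard tableau of shape $\la$, content $\mu$, with $w(A)$ a ballot sequence; it also requires that the lower-left corner of $\vk(A)$ be unbarred, where $\vk(A)$ is the image of $A$ under tableau switching from the small-bar order to the natural order. This condition is not a statement about row-content aggregates at all: verifying it requires actually carrying out the switching, and the switching as defined in the literature proceeds by individual jeu-de-taquin slides, so it naively takes time on the order of $n$ -- pseudo-polynomial, i.e.\ exponential in the binary input. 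Your ``compressed Yamanouchi'' prefix-sum argument handles semistandardness and the ballot condition, but says nothing about this condition, and your generic description of the rule (``colors, marked cells'') does not even identify it. The substance of the paper's proof is precisely the missing piece: each barred letter $\ov i$ can be moved past an entire horizontal strip of equal unbarred letters $j$ in a single combined step (it lands either in the row below or at the end of that strip), so with only $k$ barred letters the full switching runs in polynomially many operations on the compressed encoding, after which the corner condition can be read off.

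A secondary point: you assert the decoration data has size polynomial in $\ell$ and $\log N$ because it is ``controlled by $k$,'' but you never bound $k$. The paper first checks, via Dvir's vanishing result, whether $\ell(\nu)=k+1>\ell(\la)\ell(\mu)$ (in which case $g(\la,\mu,\nu)=0$ and there is nothing to certify), so that in the remaining case $k\le\ell(\la)\ell(\mu)$ and the $2\times k$ table of barred positions is genuinely of polynomial size. You should include such a reduction (or an explicit hypothesis on how the hook is encoded) before claiming the witness is short.
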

We consider barred and unbarred entries $\ov 1, \ov 2, \ldots, 1, 2, \ldots$,
which we use to fill a Young tableau.  Such tableau is called \emph{semi-standard}
if the entries are weakly increasing in both rows and columns, with no two equal
barred numbers in the same row, and no two equal unbarred numbers in the same
column.  The \emph{content} of a tableau~$A$ is a sequence
$(m_1,m_2,\ldots)$, where $m_r$ is the total number of~$r$ and $\ov r$ entries in~$A$.
Two orders are considered:
$$\text{\it natural order}: \qquad \ \ov 1 \. < \.  1 \. <  \ov 2 \. < \.  2 \. < \. \ldots \ \ \, \text{and}
$$
$$\text{\it small barred order}: \qquad \ov 1 \. \prec  \.  \ov 2 \. \prec  \. \ldots \.
\prec \. 1 \. \prec  \.  2 \. \prec \. \ldots
$$
An example of two tableaux with different orders but the same shape and content
is given in Figure~\ref{f:semi}.
\begin{figure}[hbtp]
   \includegraphics[scale=0.38]{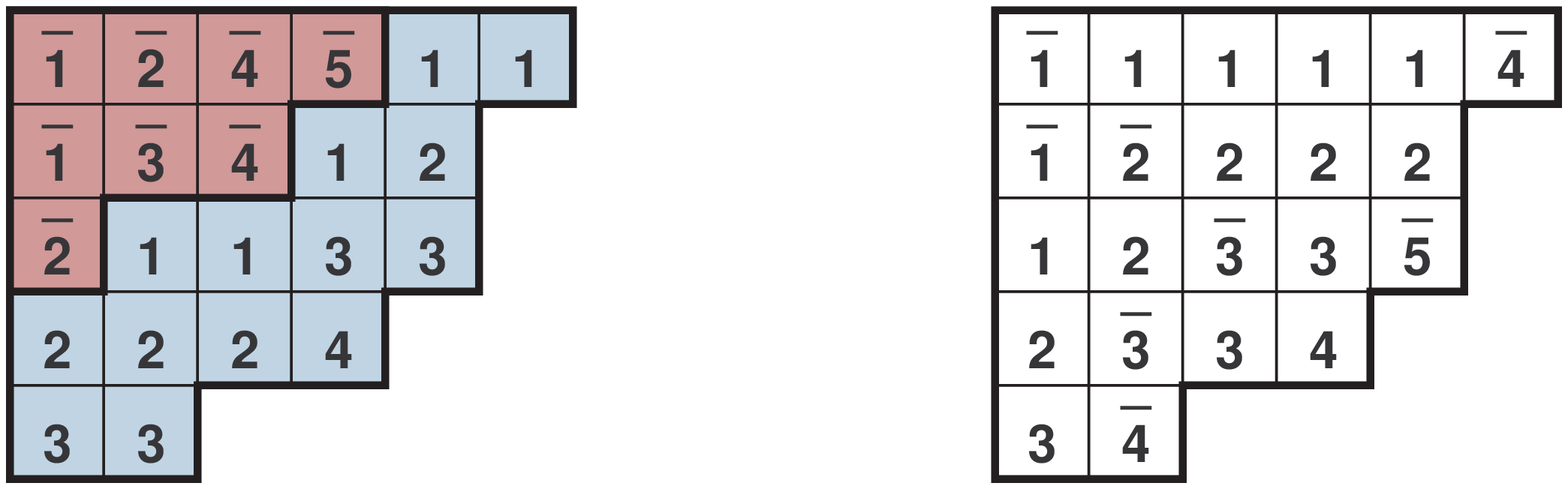}
   \caption{Semi-standard Young tableaux of shape $(6,5^2,4,2)$ with small barred and natural order,
   of the same content $(7,6,5,3,2)$.}
   \label{f:semi}
\end{figure}

There is a natural \emph{tableau switching} bijection $A \leftrightarrow \vk(A)$ between semi-standard tableaux
with natural and small bar ordering, preserving the shape and content.  The idea is to make a number
of jeu-de-taquin slides exchanging barred and unbarred entries in order to convert the tableau from one order to the other. Specifically, a jeu-de-taquin slide is the following local operation: given an ``out-of-order'' entry $c$, i.e. such that the element to its left $a$ and/or the element above it $b$ is larger, then we exchange $c$ with the larger among $a,b$:
\ytableausetup{boxsize=2.5ex}
$$\ytableaushort{\none b,ac} \quad \text{ with $c>a$ and/or $c>b$ } \longrightarrow \begin{cases} \ytableaushort{\none c,ab} \quad , & \text{ if $b> a$ or $b=a$ and are unbarred,} \\[0.2in] \ytableaushort{\none b,ca} \quad , & \text{ if $a>b$ or $a=b$ and are barred.}\end{cases} $$
Tableau-switching is then the process in which we start with a tableau in the small-bar order and sort it into the natural order, by applying the jeu-de-taquin described above starting with the smallest and left-most unbarred entry which is out-of-order with respect to the natural ordering and moving it ``up'' until both entries above and left of it are smaller or equal. Then we continue with the next smallest left-most out-of-order entry and so on until the tableau is a natural SSYT. It is a well known that this entire process is well-defined (see~\cite{Bla,BSS}).

Note that one can view a small bar order as a pair of two semi-standard Young tableaux~$A_1$
and~$A_2$, one of shape $\la/\nu$ and one of shape~$\nu'$, respectively. Here $A_1$ is the subtableau consisting of the unbarred entries of $A$, and $A_2$ is the conjugate of the subtableau of barred entries of $A$. Denote by $w(A)$
the word obtained by reading $A_2$ right to left, top row to bottom row, concatenated with
the word obtained by reading $A_1$ top to bottom, right to left.  For example, for~$A$
as in the figure, we have
$$
w(A) \. = \. 11.21.3311.4222.33..2.134.1245\ts.
$$
By $m_i(w)$ denote the number of $i$-s in~$w$.
Word~$w$ is called a \emph{ballot sequence} if in its every prefix~$w'$
we have \. $m_1(w') \ge m_2(w') \ge \ldots$ \.  For example, $w(A)$ as above
is not a ballot sequence as can be seen for the prefix $w'=112133$, with
$m_2(w')=1 < m_3(w')=2$.

\begin{thm}[Blasiak]\label{thm:blasiak}
Let $\la, \mu \vdash n$ and $\nu=(n-k,1^k)$.  Then $g(\la,\mu,\nu)$ is equal to the
number of small bar semi-standard tableaux~$A$ of shape~$\la$, content~$\mu$,
with~$k$ barred entries, such that $w(A)$ is a ballot sequence,
and such that the lower left corner of $\vk(A)$ is unbarred.
\end{thm}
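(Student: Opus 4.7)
The plan is to start from a signed identity expressing $g(\la,\mu,(n-k,1^k))$ as an alternating sum of products of Littlewood--Richardson coefficients, then repackage the summands as colored tableaux and use a sign-reversing involution to cancel the unwanted contributions, so that the fixed points are exactly the tableaux described in the statement.

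For the first step, I would iterate the Pieri-type identity $h_a \ts e_b = s_{(a,1^b)} + s_{(a+1,1^{b-1})}$ to obtain the hook expansion
$$
s_{(n-k,1^k)} \, = \, \sum_{i=0}^{k}\. (-1)^{i}\ts h_{n-k+i}\ts e_{k-i}\ts.
$$
Applying Littlewood's identity~\eqref{Littlewood} with $\tau=(n-k+i)$ and $\theta=(1^{k-i})$, and using that $s_\alpha * h_{|\alpha|}= s_\alpha$ and $s_\beta * e_{|\beta|} = s_{\beta'}$ (since $h_c$ is the trivial character and $e_c$ is the sign character of~$S_c$), I would extract the coefficient of $s_\la$ to obtain
$$
g\bigl(\la,\mu,(n-k,1^k)\bigr) \, = \, \sum_{i=0}^{k} \. (-1)^i \sum_{\alpha,\ts\beta}\, c^{\mu}_{\alpha,\beta}\, c^{\la}_{\alpha,\beta'}\ts,
$$
where the inner sum ranges over $\alpha\vdash n-k+i$ and $\beta\vdash k-i$.

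For the second step I would realize each product $c^\mu_{\alpha,\beta}\ts c^\la_{\alpha,\beta'}$ as a count of pairs of skew LR tableaux sharing the common inner shape $\alpha$, and glue them (conjugating the barred side to pass from $\beta$ to $\beta'$) into a single small-bar filling of the Young diagram $[\la]$ of content $\mu$. The barred subtableau will occupy a subshape whose conjugate is $\beta'$, the unbarred subtableau sits on $\la/\alpha$, and the LR conditions on the two original tableaux translate jointly into a ballot-type condition on $w(A)$. The sign $(-1)^i$ becomes an alternating weight on the size of the barred half, and the alternating sum is recast as a signed count over a combinatorial family of such colored tableaux.

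The third and technically hardest step is constructing a shape- and content-preserving sign-reversing involution on the colored tableaux that are \emph{not} of the form described in the statement. The natural approach is to scan $w(A)$ from left to right, locate either the leftmost prefix violating the ballot inequality $m_i(w')\ge m_{i+1}(w')$ or — when $w(A)$ is already a ballot sequence — the status of the lower-left corner of $\vk(A)$, and then perform a local surgery that swaps a single cell between the barred and unbarred halves, thereby flipping the parity of $|\beta|$ and the sign. Proving that this surgery is well-defined requires checking that it commutes with the jeu-de-taquin slides of tableau switching~\cite{BSS} and produces another valid small-bar SSYT with the same shape and content; the corner condition itself is forced by the telescoping $s_{(a,1^b)} = h_a e_b - s_{(a+1,1^{b-1})}$, since the uncanceled boundary of the telescope corresponds precisely to configurations in which the lower-left corner of $\vk(A)$ is unbarred. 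The fixed points of the involution are then exactly the tableaux in the statement, and the signed sum collapses to their count, completing the proof along the lines of~\cite{Bla}.
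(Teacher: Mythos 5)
First, a point of comparison: the paper does not prove this statement at all --- Theorem~\ref{thm:blasiak} is quoted from Blasiak~\cite{Bla} and used as a black box in the proof of Theorem~\ref{thm:hooks} --- so your proposal cannot be matched against an internal argument and must stand on its own as a proof of Blasiak's theorem. Its first half is correct: the telescoping expansion $s_{(n-k,1^k)}=\sum_{i=0}^{k}(-1)^i h_{n-k+i}\ts e_{k-i}$, Littlewood's identity~\eqref{Littlewood}, and the evaluations $s_\alpha * h_{|\alpha|}=s_\alpha$, $s_\beta * e_{|\beta|}=s_{\beta'}$ do yield the signed identity you write, and each inner sum $\sum_{\alpha,\beta}c^{\mu}_{\alpha\beta}c^{\la}_{\alpha\beta'}$ can indeed be identified, via the concatenation-of-reading-words form of the Littlewood--Richardson rule, with the number of small-bar semistandard tableaux of shape~$\la$ and content~$\mu$ with exactly $k-i$ barred entries whose word $w(A)$ is ballot (this is the ``gluing'' you gesture at; it needs to be spelled out, but it is standard).

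The genuine gap is in your third step, and it is not a technicality but the entire content of the theorem. Once the inner sums are interpreted as above, \emph{every} object in \emph{every} term of the alternating sum is already ballot, so an involution that scans $w(A)$ for ``the leftmost prefix violating the ballot inequality'' has nothing to cancel; the cancellation must instead pair tableaux with $j$ barred cells against tableaux with $j-1$ barred cells for each $j\le k$. What makes the count collapse to the stated one is the claim that tableau switching followed by toggling the bar on the lower-left corner of $\vk(A)$, followed by switching back, is a bijection between the ballot tableaux with $j$ barred cells whose switched tableau has a barred corner and the ballot tableaux with $j-1$ barred cells; granting this, the sum telescopes to the corner-unbarred tableaux with $k$ barred cells. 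Your sketch replaces this lemma with the assertions that the surgery ``commutes with the jeu-de-taquin slides'' and that the corner condition is ``forced by the telescoping,'' but neither is an argument: proving that the corner toggle, conjugated by switching, returns a valid small-bar semistandard tableau and, crucially, preserves the ballot property of $w(A)$, and that the map is invertible, is exactly the hard step in \cite{Bla} (and in the later simplified involution proofs), and it is missing here. As written, the proposal establishes the easy algebraic identity and leaves the essential combinatorial lemma as a plausibility claim, so it is not yet a proof.
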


Although we never defined tableaux switching, as originally defined in~\cite{BSS}
only gives a pseudo-polynomial time algorithm.  Below we show that one can
speed up the tableaux switching to be able to check the validity of
Blasiak's tableaux in polynomial time (cf.~\cite{PV2}).

\begin{proof}[Proof of Theorem~\ref{thm:hooks}]
First, we check whether $\ell(\nu)=k+1$ is greater than $\ell(\la)\ell(\mu)$.
It follows from~\cite{Dvir} that in the case we have $g(\la,\mu,\nu)=0$.

Now, if this is not the case, we must have $k \leq \ell(\la)\ell(\mu)$,
so~$k$ is polynomial in the size of the input. Proceed as follows.

The polynomial witness for \KP(hook) ~is a small-bar semi-standard tableaux $A$ of shape~$\lambda$, content~$\mu$ and $k$ barred entries.
We encode the tableaux as a pair of one $\ell(\mu) \times \ell(\la)$ matrix and one $2 \times k$ matrix. In the first matrix, the entry at position $(i,j)$ records the number of $i$'s in row~$j$. In the second matrix,  each column corresponds to a single barred letter and contains the coordinates of its position in the tableaux. This encoding requires $\ell(\mu)\ell(\la)\log N +  2k\log N$ bits. As noted above, this is polynomial in the input size.

We show that tableau-switching can be done in polynomial time. Indeed, switching a single $\ov i$ and all unbarred letters~$j$ can be done in one step, as follows. If in the process, $\ov i$ becomes adjacent to a letter $j$ for the first time then the letter $\ov i$ should move either to the row below or to the end of the horizontal strip of letters $j$.  Since there are $k=\ell(\nu)-1$ barred letters,  the tableaux-switching is done in polynomially many operations.

By Theorem~\ref{thm:blasiak}, \KN(hook) ~counts the number of tableaux as in the theorem. From the above argument, we can verify that they satisfy the condition in the Theorem in polynomial time. Moreover, the number of such tableaux is at most exponential in the input size. 
This implies the result.
\end{proof}

\bigskip

\section{Complexity of~\KN~and the proofs}\label{s:complexity_kp}

We turn towards the computational complexity of~\KN~as defined in Section~\ref{s:complexity}. Here we prove Theorem~\ref{thm:nu_bounded} and related results. We first establish our main tool for this, the Reduction Lemma.

\subsection{The Reduction Lemma}
In order to prove these statements we will need the following Lemma. Informally, it states that under the conditions of Theorem~\ref{thm:nu_bounded} we can either conclude that $g(\la,\mu,\nu)=0$ or reduce the computation of $g(\la,\mu,\nu)$ to the computation of a Kronecker coefficient for much smaller partitions. To describe these partitions and state the lemma we construct the following \emph{reduction map} $\varphi$ on partitions, which depends on
 a fixed triple of partitions $(\la,\mu,\nu)$ for which $|\lambda_i - \mu_i| \leq n-\nu_1$.

\smallskip
Let  $\ell(\la)$, $\ell(\mu)$, $\ell(\nu) \le \ell$, set $t=n-\nu_1$ and suppose that
and $|\lambda_i -\mu_i| \leq t$ for all~$i\le \ell$ (otherwise, the map~$\varphi$ is undefined).
Denote \ts $\omega = \lambda \cup \mu$ and \ts $\rho = \lambda \cap \mu$.
Let $I=\{ i: \rho_i \geq \omega_{i+1}+t, 1\leq i \leq \ell\}\cup \{\ell+1\}$, where
$\omega_{\ell+1}=0$. For all indices~$j$, set
$i_j = \min\{i \in I, i\geq j\}$ and let $ind_I(i)=\#\{ i' \in I : \ell\geq i' \geq i\}$ be the position of $i$ in $I$ when sorted in decreasing order (without counting the entry $\ell+1$) and set $ind_I(\ell+1) =1$.

For any partition $\theta$ with at most $\ell$ parts and with $\rho \subset \theta+(t^\ell)$, define the partition $\varphi(\theta)$  via its parts by
$$\varphi(\theta)_j \. = \. \theta_j - \rho_{i_j} +t(\ell-i_j + ind_I(i_j))\.,
1 \le j \le \ell\ts.
$$
Let $r=|\varphi(\la)| = |\varphi(\mu)|$, where the latter equality follows
by construction.  Define also \ts $\varphi(\nu) = (r-s, \nu_2,\nu_3,\ldots)\vdash r$.

\begin{lemma}[Reduction Lemma]\label{l:reduction}
Given integers $n$, $\ell$ and partitions $\lambda,\mu,\nu \vdash n$ such that $\ell(\la), \ell(\mu),\ell(\nu) \leq \ell$, let $t=n-\nu_1$.  We have the following two cases:
\begin{enumerate}
\item[(i)] If $|\lambda_i - \mu_i| > n-\nu_1$ for some $i$, then $g(\lambda,\mu,\nu)=0$,
\item[(ii)] If $|\lambda_i-\mu_i|\leq n-\nu_1$ for all $i=1,\ldots,\ell$, then $g(\lambda,\mu,\nu) = g(\shortp{\lambda},\shortp{\mu},\shortp{\nu})$. 
\end{enumerate}
Moreover we have that $|\shortp{\lambda}|=|\shortp{\mu}|=|\shortp{\nu}| \leq  2(n-\nu_1)\ell^2$.
\end{lemma}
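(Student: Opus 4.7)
The plan is to prove both cases via the Littlewood identity \eqref{Littlewood} together with an iterated ``block-peeling'' reduction. For part (i), I argue by contrapositive. Set $\tilde\nu := (\nu_2,\nu_3,\ldots)\vdash t$ where $t=n-\nu_1$. The Littlewood--Richardson rule gives $s_{(n-t)}\ts s_{\tilde\nu} = \sum_{\nu'} c^{\nu'}_{(n-t),\tilde\nu}\ts s_{\nu'}$, where $\nu'$ ranges over partitions obtained from $\tilde\nu$ by adding a horizontal strip of size $n-t$; in particular, $c^\nu_{(n-t),\tilde\nu}=1$. Applying \eqref{Littlewood} with $\tau=(n-t)$ and using that $s_\alpha*s_{(n-t)}=s_\alpha$ (since $(n-t)$ is the trivial $S_{n-t}$-character), then extracting the coefficient of $s_\mu$, yields
\begin{equation*}
\sum_{\nu'} c^{\nu'}_{(n-t),\tilde\nu}\ts g(\la,\mu,\nu')
\. = \. \sum_{\alpha\ts\vdash\ts n-t,\,\beta,\ts\gamma\ts\vdash\ts t}\ts c^\la_{\alpha,\beta}\ts c^\mu_{\alpha,\gamma}\ts g(\beta,\gamma,\tilde\nu)\ts.
\end{equation*}
Since the left-hand side is at least $g(\la,\mu,\nu)$, positivity of the Kronecker coefficient forces the right-hand side to be positive, producing some $\alpha\vdash n-t$ contained in both $\la$ and $\mu$. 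Since $\la$ and $\mu$ each differ from $\alpha$ by only $t$ boxes, one gets $\la_i-\alpha_i\leq t$ and $\mu_i-\alpha_i\leq t$, hence $|\la_i-\mu_i|\leq t$ for all $i$, proving (i).

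For part (ii), I would establish the following \emph{block-peeling identity} and iterate it over $I$. Fix $i_0=\min\bigl(I\sm\{\ell+1\}\bigr)$ and set $c:=\rho_{i_0}-\omega_{i_0+1}-t\geq 0$. Then I would show that
\begin{equation*}
g(\la,\mu,\nu) \. = \. g\bigl(\la-(c^{i_0}),\,\mu-(c^{i_0}),\,\nu-(i_0\ts c,\,0,\ldots,0)\bigr),
\end{equation*}
the point being that removing a rectangle of height $i_0$ and width $c$ from the first $i_0$ rows of $\la,\mu$ is exactly counterbalanced by shortening $\nu_1$ by $i_0\ts c$. The key tool is again \eqref{Littlewood}, this time applied with $\tau=(c^{i_0})$ and $\theta$ the complementary shape. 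The stability buffer $t$ built into the condition $i_0\in I$, combined with the constraint $|\la_i-\mu_i|\leq t$ from part (i), forces all Littlewood--Richardson coefficients in the expansions of $s_\la$ and $s_\mu$ as $s_{(c^{i_0})}$-multiples to collapse to a single ``straight'' peel on the relevant sector; similarly, the assumption $\nu_1\geq \nu_2+i_0\ts c$ ensures the peel from $\nu$ is unambiguous. Iterating the peeling over all indices in $I$, in the decreasing order tracked by $ind_I$, produces precisely the map $\varphi$: the accumulated shift $t(\ell-i_j+ind_I(i_j))$ in the formula for $\varphi(\theta)_j$ records the cumulative effect of the successive peels, and the quantity $-\rho_{i_j}$ records the total width removed from row~$j$.

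The size bound $|\varphi(\la)|\leq 2t\ell^2$ follows by a direct estimate from the defining formula. For indices $j\leq j'<i_j$ outside $I$, the defining condition $\rho_{j'}-\omega_{j'+1}<t$ together with $|\omega_{j'+1}-\rho_{j'+1}|\leq t$ telescope to give $\theta_j-\rho_{i_j}=O(t\ell)$, while $t(\ell-i_j+ind_I(i_j))\leq t\ell$ is immediate; summing $\ell$ coordinates gives the bound. The principal obstacle I anticipate is the block-peeling identity itself, where one must verify that the LR expansions of $s_\la$, $s_\mu$, and $s_\nu$ interact so that only the diagonal term survives; this collapse is precisely what makes the buffer $t$ essential. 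The interplay between (i) and (ii) is delicate, since (i) is reinvoked at each stage of the iteration to eliminate non-diagonal LR contributions at the partially-reduced triple.
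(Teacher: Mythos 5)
Your part~(i) is correct and is essentially the paper's own argument run in contrapositive form: the paper likewise applies Littlewood's identity with the one-row factor $s_{n-t}$, observes that a nonzero term forces a common subpartition $\al\subseteq\la\cap\mu$ with $|\al|=n-t$, and then uses the Pieri expansion of $s_{n-t}\ts s_\pi$ together with nonnegativity of Kronecker coefficients to conclude $g(\la,\mu,\nu)=0$.

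Part~(ii), however, has a genuine gap, and it sits exactly where you flag it. Your whole reduction rests on the block-peeling identity $g(\la,\mu,\nu)=g\bigl(\la-(c^{i_0}),\mu-(c^{i_0}),\nu-(i_0c,0,\ldots,0)\bigr)$, which you do not prove, and the tool you propose cannot deliver it as sketched: applying Littlewood's identity with $\tau=(c^{i_0})$ produces factors $s_\al * s_{(c^{i_0})}$, and the collapse $s_\al * s_\tau = s_\al$ that makes this identity usable holds only when $\tau$ is a single row (the paper only ever takes $\tau=(n-r)$); for a genuine rectangle these Kronecker products are complicated sums, so the claim that ``only the diagonal term survives'' is precisely the missing content rather than a consequence of the buffer $t$. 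Moreover, your assertion that iterating the peels ``produces precisely the map $\varphi$'' is false: in the paper's own example ($\la=(19,15,12,5,1)$, $\mu=(16,16,14,3,3)$, $t=3$, so $\rho=\la\cap\mu$, $\omega=\la\cup\mu$, $I\sm\{\ell+1\}=\{3\}$) your single peel at $i_0=3$ removes width $c=\rho_3-\omega_4-t=4$ from the top three rows, whereas $\varphi$ shifts them by $3$; the iteration therefore terminates at a triple different from $(\shortp{\la},\shortp{\mu},\shortp{\nu})$, so even granting the peeling identity you would not have proved the stated equality with the paper's $\varphi$. The paper's route supplies exactly what your sketch lacks: it shows that $\varphi$ preserves the relevant skew shapes (connected components of $\la/\al$ and $\mu/\al$ for every $\al$ contributing a nonzero Littlewood--Richardson coefficient), hence $c^{\la}_{\al\be}=c^{\shortp{\la}}_{\shortp{\al}\be}$ and $c^{\mu}_{\al\ga}=c^{\shortp{\mu}}_{\shortp{\al}\ga}$; from this it deduces $[s_\mu]\bigl(s_\la*(s_{n-r}s_\eta)\bigr)=[s_{\shortp{\mu}}]\bigl(s_{\shortp{\la}}*(s_{\shortp{n-r}}s_\eta)\bigr)$ for all $r\le t$, and only then handles $\nu$ by expanding $s_\nu$ via Jacobi--Trudi into an alternating sum of terms $h_{\nu_1+j}\ts s_\eta$ to which that identity applies term by term. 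Neither the skew-shape comparison nor the Jacobi--Trudi step appears in your proposal, and without them there is no mechanism matching the peel of $\nu_1$ against the rectangular peels of $\la$ and $\mu$.
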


\begin{figure}[hbtp]
\psfrag{r1}{$\rho-t^\ell$}
\psfrag{r2}{{\textcolor{magenta}{$\rho$}}}
\psfrag{r3}{$\rho-t^\ell$}
\psfrag{r4}{{\textcolor{red}{$\theta$}}}
\psfrag{p1}{$\vp(\rho)$}
\psfrag{p4}{}
\psfrag{o1}{$\omega$}
\psfrag{o2}{$\vp(\omega)$}
\psfrag{o3}{$\rho$}
\psfrag{o4}{{\textcolor{red}{\hskip-.55cm $\vp(\theta)$}}}
\includegraphics[scale=0.48]{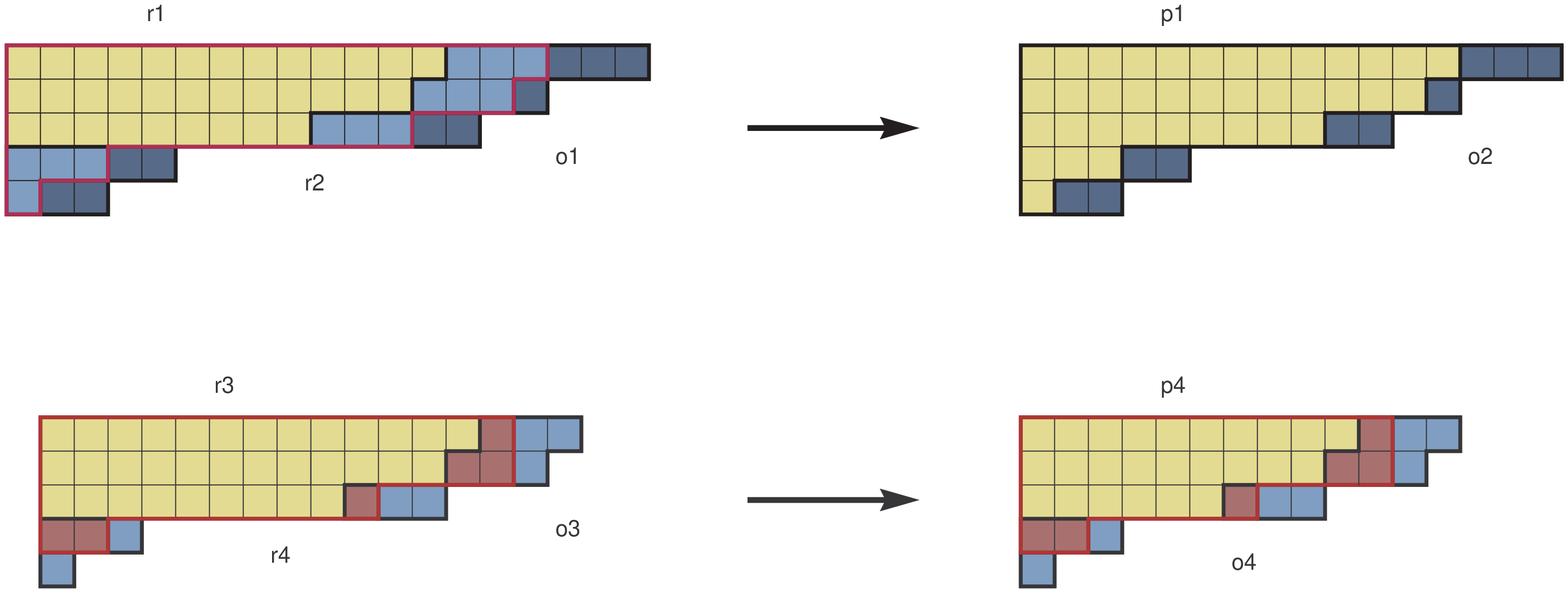}
   \caption{Example of the map $\shp$.}
\label{f:phi}
\end{figure}

\begin{ex}\label{ex:phi}{\rm
Let $\la =(19,15,12,5,1)$, $\mu =(16,16,14,3,3)$ and $t=3$. These partitions give $\omega = (19,16,14,5,3)$ and $\rho=(16,15,12,3,1)$. The relevant partitions are displayed in the top picture of Figure~\ref{f:phi}: the dark blue area is the skew shape $\omega/\rho$, whereas all blue areas represent the connected components of $\omega/(\rho-3^5)$. We have that $I=\{3,6\}$, $ind_I(3)=1$ and $ind_I(6)=1$, and we see that the map $\shp$ shifts the top 3 parts of any partition 3 boxes to the left and leaves   the bottom parts intact, giving $\shortp{\omega}= (16,13,11,5,3)$ and $\shortp{\rho}=(13,12,9,3,1)$.  Now, for $\theta =( 14,14,10,2)$, we have $\shortp{\theta} = (11,11,7,2)$, as in the figure.
}
\end{ex}

The rest of this subsection is the proof of this lemma.

\medskip

\noindent {\bf Preliminaries.} We start with a few observations and a setup for the proof.

 Clearly, by the definition of $*$, we have $s_\al*s_{n-r}=s_\al$ for all $\al \vdash n-r$.
  Then Littlewood's identity~\eqref{Littlewood}, gives
$$
[s_{\tau}] (s_{\sigma} * (s_{n-r}\ts s_{\zeta})) \, = \,
\sum_{\alpha\vdash n-r, \. \beta \vdash r} c^{\sigma}_{\al\beta} [s_\tau] \. s_{\al} (s_\beta*s_\zeta)
$$
for any three partitions  $\tau, \sigma, (n-r, \zeta) \vdash n$.
Expressing the remaining Kronecker products as Schur functions via
$$s_\beta * s_\zeta \, = \, \sum_{\gamma\vdash r} \. g(\beta,\zeta,\gamma)\ts s_\gamma\.,
$$
we conclude that the coefficient of $s_{\tau}$ can be obtained as
\begin{align}\label{mu_littlewood}
[s_{\tau}] \bigl(s_{\sigma} * (s_{n-r}\ts s_{\zeta})\bigr)
\, = \, \sum_{\alpha \vdash n-r, \. \beta \vdash r, \. \gamma \vdash r} \.
c^{\sigma}_{\al\beta} \ts c^{\tau}_{\al\gamma}\ts g(\beta,\zeta,\gamma)
\end{align}

Let $\pi=(\nu_2,\ldots,\nu_\ell)$, so that $\nu=(n-t,\pi)$.

\medskip
\noindent
{\bf Proof of part~(i).}
Suppose that $|\lambda_i - \mu_i| > t$ for some $i$ and assume without loss of generality that $\lambda_i > \mu_i$  Then we must have
$$|\lambda \cap \mu| \leq  \sum_{j \neq i} \lambda_j + \mu_i = n - \lambda_i + \mu_i < n-t. $$
In particular, there are no partitions $\al \vdash n-t, \beta \vdash t$, such that $c^{\la}_{\al\beta} \neq 0$ and $c^{\mu}_{\al\beta}\neq 0$. Otherwise by the Littlewood-Richardson rule, we must have $\al \subset \la, \mu$, so $\al \subset \la \cap \mu$ and then
$n-t = |\al| \leq |\la \cap \mu| <n-t$, contradicting the inequality above.

 Substituting  \ts $\tau\gets \mu$, $\sigma\gets \la$, $r\gets s$ and $\zeta\gets \pi$ \ts
 in equation~\eqref{mu_littlewood},  we then get
 \begin{equation}\label{eq1:sum}
[s_\mu] \left(s_\la * (s_{n-t} s_\pi ) \right)
 = \. \sum_{\alpha \vdash n-s, \. \beta \vdash s, \. \gamma \vdash s} \.
 c^{\la}_{\al\beta} \ts c^{\mu}_{\al\gamma}\ts g(\beta,\pi,\gamma)\. = 0.
 \end{equation}

The Pieri rule~\cite[\S 7.15]{Sta} gives
$$
s_{n-s} \ts s_{\pi} \. = \. s_{(n-s,\pi)} \. + \. \sum_{\eta\in P_{\nu}} \. s_{\eta}\.,
$$
where $P_{\nu}$ is the set of partitions $\eta$ obtained by adding a horizontal strip of length $n-s$ to $\pi$ (i.e. adding $n-s$ boxes to Young diagram of $\pi$, so that no column obtains more than one box)  and such that $\eta \neq \nu$. Taking Kronecker product with $s_\la$, extracting the coefficient of $s_\mu$ and comparing with equation~\eqref{eq1:sum} gives
\begin{equation}\label{mu_pieri}
0=[s_{\mu}] (s_{\la} * (s_{n-s}\ts s_{\pi})) \, = \, g(\la,\mu,\nu) \. + \. \sum_{\eta \in P_{\nu}} \. g(\la,\mu,\eta)\..
\end{equation}
Since $g(\la,\mu,\eta), g(\la,\mu,\nu) \geq 0$, they must all be equal to 0, so in particular $g(\la,\mu,\nu)=0$.
%

\medskip

\noindent{\bf Proof of part~(ii).}
Suppose now that $|\la_i-\mu_i| \leq t$ for all $i$. As in the definition of $\vp$ above, let $\omega =\la \cup \mu$ and $\rho = \la \cap \mu$. Then we have $\rho \subseteq \omega$ and $\omega_i-\rho_i \leq t$.  The idea of $\vp$ is to shift to the left the connected components in the Young diagram of $\omega/\rho$ and then perform the same shifts of parts in $\theta$ for any partition $\theta$, so that the resulting partitions are much smaller, but the skew shape $\theta/(\rho-t^\ell)$ is preserved under $\shp$. The set $I$ indicates the rows where the skew shape $\omega/(\rho-t^\ell)$ is disconnected and is used to divide the skew shapes in the corresponding connected components.

Observe that $\vp(\theta) = \theta - \rho+\vp(\rho)$ and so given any partition $\al$ of at most $\ell$ parts, we can construct the inverse $\vp^{-1}(\al) = \al+\rho - \vp(\rho)$. Since $\rho-\vp(\rho)$ is also a partition, the map $\vp$ is then a bijection from the set of partitions containing $\rho-(t^\ell)$ to all partitions.

Let $r$ be as in the preliminaries and consider equation~\eqref{mu_littlewood}. The idea is to first reduce the Littelwood-Richardson coefficients appearing there. Let $r\leq t$. We have $\alpha \vdash n-r$ and $\beta, \gamma \vdash r$. Observe that it suffices to consider only nonzero terms in the rhs of~\eqref{mu_littlewood}. Then, the condition $c^{\la}_{\alpha\beta}c^{\mu}_{\alpha\gamma} > 0$ implies that $\alpha \subseteq\la\cap\mu=\rho$. Since $|\rho|-|\alpha|\leq |\rho|-n+r\leq r
\leq t$, we also have $\rho \subseteq\alpha + (t^\ell)$ and we can construct $\shortp{\alpha}$.

Note that $c^{\la}_{\alpha\beta}$ is equal to the number of Littlewood-Richardson tableaux of shape $\la/\alpha$ of type~$\beta$. Note that the shapes $\la/\al$ and $\shortp{\la}/\shortp{\al}$ have identical connected components. This follows from the fact that in the horizontal shifts performed by $\vp$ at the rows in $I$ the component ending in row $i_j$ is not shifted beyond the end of the component right below, since
$$\vp(\al)_{i_j} - \vp(\la)_{i_j+1}  = \underbrace{\al_{i_j} - \rho_{i_j}}_{\geq -t}  - \underbrace{(\la_{i_j+1} -v\rho_{i_{j+1}})}_{\leq t(i_j - i_{j+1}) }  +t(i_{j+1}-i_j) +t \underbrace{(ind_I(i_j) -ind_I(i_{j+1}) )}_1 \geq 0.$$
Then the skew tableaux $\la/\al$ and $\vp(\la)/\vp(\al)$ are indentical, so
 we have the same number of LR tableaux of fixed type and thus
$$c^{\la}_{\al\beta}=c^{\shortp{\la}}_{\shortp{\al}\beta} \qquad \text{ and }\qquad c^{\mu}_{\al\gamma}=c^{\shortp{\mu}}_{\shortp{\al}\gamma}\, .$$

Similarly, using the inverse of $\vp$, when $|\theta| \geq  |\vp(\la)| - t$, then
$$c^{\vp(\la)}_{\theta \be} = c^{\la}_{\vp^{-1}(\theta)\be} \qquad \text{ and } c^{\vp(\mu)}_{\theta\gamma} = c^{\mu}_{\vp^{-1}(\theta)\gamma}.$$
%

Applying $\shp$ and the above observations, we can now rewrite~\eqref{mu_littlewood} with $\tau\gets \mu$, $\sigma \gets \la$, $\zeta \gets \eta$ and then apply it again for $\tau\gets \shortp{\mu}$, $\sigma \gets \shortp{\la}$. We have
\begin{equation}\label{mu_hat}
\begin{split}
[s_\mu] \; \bigl(\; s_\la * (s_{n-r} s_\eta)\bigr) &= \sum_{\al \vdash n-r, \; \beta \vdash r, \; \gamma \vdash r}  \;\; c^{\la}_{\al\beta} \;\; c^{\mu}_{\al\gamma} \; g(\beta,\eta,\gamma) \\
&=\sum_{\al \vdash n-r, \; \beta \vdash r, \; \gamma \vdash r}\;  c^{\shortp{\la}}_{\shortp{\al}\beta}\; \; c^{\shortp{\mu}}_{\shortp{\al}\gamma} \;\; g(\beta,\eta,\gamma) \\&=
\sum_{\theta \vdash \shortp{n-r},\;  \beta \vdash r,\; \gamma \vdash r}\;  c^{\shortp{\la}}_{\theta\beta}\;\;c^{\shortp{\mu}}_{\theta\gamma}\;\; g(\beta,\eta,\gamma)\\& =\;\;  [s_{\shortp{\mu}}]\;\bigl(\; s_{\shortp{\la}} * (s_{\shortp{n-r}}s_{\eta})\bigr) \, .
\end{split}
\end{equation}

We now show that this leads to part (ii) in the lemma. The idea is that $s_\nu$, for $\nu=(n-t,\pi)$, can be expressed as a linear combination of terms of the form $s_{n-r}s_\eta$ with $r \leq s\leq t$. We then apply~\eqref{mu_hat} to obtain the desired equality for the Kronecker coefficients.

By the Jacobi-Trudi identity, for any partition $\xi$, we have
\begin{equation}\label{nu_jacobi}
\begin{split}
s_{\xi}\; &= \;\det\left[ \; h_{\xi_i-i+j}\; \right]_{i,j=1}^\ell = \sum_{j=1}^\ell\;  (-1)^{j-1}\ts h_{\xi_1-1+j} \ts \det\left[\; h_{\xi_i-i+\ell}\;\right]_{i=2,k=1,\ell\neq j}^\ell \\
&= \sum_{j=0}^{\ell-1} \; (-1)^j \; h_{\xi_1+j}\left(\sum_{\eta \vdash s-j} c_{\eta}\; s_{\eta} \right)= \sum_{j=0}^{\ell-1}\sum_{\eta \vdash n-j}\; (-1)^j c_{\eta}\; s_{\xi_1+j}\;s_{\eta}\,.
\end{split}
\end{equation}
Here we expand the $(\ell-1)\times(\ell-1)$ determinants of homogenous symmetric functions as sums of Schur functions, so $c_{\eta}$ are the coefficients with which they appear in the sum.

Finally, applying~\eqref{mu_hat}  with $\nu=(n-s,\pi)$ and~\eqref{nu_jacobi} with $\xi \gets \nu$, we have
\begin{equation*}\begin{split}
g(\la,\mu,\nu) &= [s_\mu]\;(s_\la * s_\nu) = [s_\mu]\left\{\;s_\la* \left(\sum_{j=0}^{\ell-1}\sum_{\eta \vdash n-j} (-1)^j\; c_{\eta}\; s_{n-s+j}\; s_{\eta} \right)\right\}\\
&= \sum_{j=0}^{\ell-1}\sum_{\eta \vdash n-j} (-1)^j \; c_{\eta}\;  [s_\mu]\; \left(s_\la*(s_{n-s+j}\;s_{\eta})\right) \\
&= \sum_{j=0}^{\ell-1}\sum_{\eta \vdash n-j} (-1)^j \; c_{\eta}\; [s_{\shortp{\mu}}] \;(s_{\shortp{\la}}*\left(s_{\shortp{n-s+j}}\;s_{\eta})\right) \\
&=[s_{\shortp{\mu}}]\; (s_{\shortp{\la}} * \sum_{j=0}^{\ell-1}\sum_{\eta \vdash n-j} (-1)^j\; c_{\eta} \;s_{\shortp{n-s}+j}\; s_{\eta}) \\
&= [s_{\shortp{\mu}}]\; \left(s_{\shortp{\la}} * s_{(\shortp{n-s},\pi)} \right) \,
= \, g\bigl(\shortp{\mu},\shortp{\la},\shortp{\nu}\bigr)\..
\end{split}\end{equation*}
Here  the penultimate equality comes from application of~\eqref{nu_jacobi} with $\xi_1 =\shortp{n-s}$. The last equality follows from  the fact that $\shortp{\nu}=(\shortp{n-s},\pi)$, since  $|\pi|\leq t$. This completes the proof of the lemma. \qed

\medskip

\subsection{Proofs of complexity results.}\label{ss:thm_proof}
The following result (Main Lemma) gives a bound on the computational complexity of Kronecker coefficients in the general case. Together with the Reduction Lemma it gives Theorem~\ref{thm:nu_bounded}. Incidentally, its proof can be used to derive Theorem~\ref{thm:gapp}, as we explain.

Before we proceed, we need the following technical result.
For integer vectors $a,b,c \in \mathbb{Z}_{\geq 0}^\ell$, denote by $C(a,b,c)$
the number of \emph{three dimensional contingency arrays} (3-way statistical tables) with 2-way marginals $a, b, c$.

\begin{lemma}\label{l:contingency}
In the notation above, let $R=\max\{a_i,b_i,c_i\, | \, i=1,\ldots,\ell\}$.
Then the number~$C(a,b,c)$ can be computed in time \ts
$(\ell\log R)^{O(\ell^3\log \ell)}$.
\end{lemma}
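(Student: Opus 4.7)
The plan is to realize $C(a,b,c)$ as the number of lattice points in an explicit rational polytope and then apply Barvinok's algorithm (Theorem~\ref{t:barv}).

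\emph{Setting up the polytope.} Let $P(a,b,c) \subset \rr^{\ell^3}$ be the polytope with coordinates $(x_{ijk})_{1 \le i,j,k \le \ell}$, cut out by non-negativity $x_{ijk} \ge 0$ together with the marginal equalities
\[
\sum_{j,k} x_{ijk} \. = \. a_i\.,\qquad \sum_{i,k} x_{ijk} \. = \. b_j\.,\qquad \sum_{i,j} x_{ijk} \. = \. c_k\..
\]
By construction, $C(a,b,c) = |P(a,b,c) \cap \zz^{\ell^3}|$ is precisely the number of lattice points in~$P(a,b,c)$.

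\emph{Estimating input size and applying Barvinok.} The ambient dimension is $d = \ell^3$. The system has $3\ell$ equalities and $\ell^3$ non-negativity inequalities, all with $\{0,1\}$ coefficients, and right-hand sides of magnitude at most~$R$. Written in the standard matrix format, its bit-size is $L = O(\ell^6 + \ell \log R) = (\ell \log R)^{O(1)}$. Invoking Theorem~\ref{t:barv} with dimension $d = \ell^3$, and using $d \log d = \ell^3 \log(\ell^3) = O(\ell^3 \log \ell)$, gives running time
\[
L^{O(d \log d)} \. = \. \bigl((\ell \log R)^{O(1)}\bigr)^{O(\ell^3 \log \ell)} \. = \. (\ell \log R)^{O(\ell^3 \log \ell)}\.,
\]
which is exactly the claimed complexity.

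No genuine obstacle arises: once the polytope is identified, the lemma is a black-box application of Barvinok's theorem. The only points requiring care are the honest accounting of the exponent $d \log d$ for $d = \ell^3$ and the folding of the polynomial bound on~$L$ into the running time; both are routine.
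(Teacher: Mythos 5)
Your proposal is correct and is essentially the paper's own proof: both realize $C(a,b,c)$ as the lattice-point count of the transportation-type polytope in $\rr^{\ell^3}$ cut out by the $3\ell$ marginal equations and nonnegativity, and then invoke Barvinok's algorithm (Theorem~\ref{t:barv}) with $d=O(\ell^3)$ and input size polynomial in $\ell\log R$ to get $(\ell\log R)^{O(\ell^3\log\ell)}$. The only difference is a slightly different (but equally valid) accounting of the input size $L$, which does not affect the bound.
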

\begin{proof}

Note that these contingency arrays are just the integer points in a polytope in $\mathbb{R}^{\ell^3}=\{(\ldots,x_{ijk},\ldots)|\; i,j,k=1,\ldots,\ell \}$, defined by the $\ell^3$ inequalities
 $$
 x_{ijk}\geq  0 \, , \; \text{for} \; 1\leq i,j,k\leq \ell
 $$
 and the $3\ell$ equations
 $$
 \sum_{j,k} x_{ijk} =a_i \;, \qquad  \sum_{i,k} x_{ijk} =b_j \; , \qquad \sum_{i,j} x_{ijk} =c_k \;.
$$
Recall Barvinok's algorithm (Theorem~\ref{t:barv}), which computes the number of integer points in
a polytope of dimension $d$ and input size $L$ in time~$L^{O(d\log d)}$.
Here we have $L = O(\ell^3\log R)$ and $d = O(\ell^3)$, which gives the result.
\end{proof}

\begin{lemma}[Main Lemma]\label{prop:g_poly}
Let $\al,\beta,\gamma \vdash n$ be partitions of the same size, such that \ts
$\al_1,\beta_1,\gamma_1 \leq m$ \ts and \ts $\ell(\al),\ell(\beta),\ell(\gamma) \leq \ell$.
Then \ts $g(\al,\beta,\gamma)$ \ts can be computed in time \ts $(\ell\log m)^{O(\ell^3\log\ell)}$.
\end{lemma}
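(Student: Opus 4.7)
The plan is to invert the extended Cauchy identity~\eqref{extended_identity} via Jacobi--Trudi, reducing $g(\al,\be,\ga)$ to a signed sum of contingency-table counts $C(a,b,c)$, each of which is then computed by Barvinok's algorithm through Lemma~\ref{l:contingency}.

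First, I would restrict~\eqref{extended_identity} to $\ell$ variables in each of $x,y,z$, and extract the coefficient of $x_1^{a_1}\cdots x_\ell^{a_\ell}\, y^b\, z^c$ from both sides. Using the monomial expansion $s_\la(x_1,\ldots,x_\ell) = \sum_a K_{\la a}\, x^a$ of Schur polynomials, the LHS yields $\sum_{\la,\mu,\nu} g(\la,\mu,\nu)\, K_{\la a} K_{\mu b} K_{\nu c}$, while the RHS is exactly $C(a,b,c)$. Since $h_a = \sum_{\la} K_{\la a}\, s_\la$, extending $g$ multilinearly to symmetric functions, this identity reads
\[
g(h_a, h_b, h_c) \, = \, C(a,b,c)\,.
\]

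Next, by the Jacobi--Trudi identity (the same expansion as in~\eqref{nu_jacobi}),
\[
s_\al \, = \, \sum_{\sigma \in S_\ell} \sgn(\sigma)\, h_{a(\sigma)}, \qquad a(\sigma)_i \, = \, \al_i - i + \sigma(i),
\]
with the convention $h_k = 0$ for $k<0$, and analogously for $s_\be$ and $s_\ga$. Substituting into $g(s_\al, s_\be, s_\ga) = g(\al,\be,\ga)$ and expanding multilinearly gives
\[
g(\al,\be,\ga) \, = \, \sum_{\sigma,\tau,\pi \in S_\ell} \sgn(\sigma)\sgn(\tau)\sgn(\pi)\, C\bigl(a(\sigma), b(\tau), c(\pi)\bigr)\,.
\]

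Finally, since $\al_1,\be_1,\ga_1 \le m$, all entries of $a(\sigma), b(\tau), c(\pi)$ are bounded by $m+\ell$, so each contingency count on the right can be computed in time $(\ell \log m)^{O(\ell^3 \log \ell)}$ by Lemma~\ref{l:contingency}. The signed sum has at most $(\ell!)^3 \le \ell^{3\ell}$ terms, which is absorbed by this Barvinok bound, yielding the claimed overall complexity. The main subtlety is step two: one must check that the multilinear extension of $g$ is well-defined on the (not necessarily Schur-positive) expansions produced by Jacobi--Trudi, and that the convention $h_k=0$ for $k<0$ correctly kills the ``degenerate'' permutations $\sigma$ where $a(\sigma)$ has a negative entry; however, both points are formal consequences of multilinearity and cause no real difficulty.
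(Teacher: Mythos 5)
Your proposal is correct and is essentially the paper's own argument: both reduce $g(\al,\be,\ga)$ to a signed sum over $(\ell!)^3$ permutation triples of $3$-way contingency array counts $C(\cdot,\cdot,\cdot)$ with marginals of the form $\al_i-i+\sigma(i)$, and then apply Lemma~\ref{l:contingency} (Barvinok) to each term, with the $(\ell!)^3$ factor absorbed into the bound. The only difference is cosmetic --- you derive the signed sum via Jacobi--Trudi and the identity $g(h_a,h_b,h_c)=C(a,b,c)$, while the paper multiplies the Cauchy kernel~\eqref{extended_identity} by Vandermonde determinants and extracts monomial coefficients via the bialternant formula; these are equivalent manipulations yielding the same formula~\eqref{gapp_eq}.
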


\begin{proof}
Consider three sets of variables, $x=(x_1,\ldots,x_{\ell})$, $y=(y_1,\ldots,y_{\ell})$ and $z=(z_1,\ldots,z_{\ell})$.
We use the determinantal formula for Schur functions:
$$s_{\theta}(x_1,\ldots,x_\ell)= \frac{ \det \left[ \; x_i^{\theta_i+\ell-j}\; \right]_{i,j=1}^{\ell} }{\Delta(x)}\, ,$$
where $\Delta$ is the Vandermonde determinant.

Thus, for a symmetric function $F(x_1,\ldots,x_\ell)$, we have
$$
[s_{\theta}]\; F=  \bigl[\; x_1^{\theta_1+\ell-1}\cdots x_\ell^{\theta_\ell}\; \bigr] \;\; \Delta(x)\;F(x)\.,
$$
i.e.~the coefficient of $s_\theta$ in the expansion of $F$ as a linear combination of Schur functions is
equal to the coefficient of the respective monomial in the expansion of the polynomial $\Delta(x)F(x)$ as a sum of monomials.
 Applying this to identity~\eqref{extended_identity} and using the staircase partition $\delta=(\ell-1,\ldots,1,0)$, gives
\begin{equation}\label{coeffs}
\begin{split}
g(\alpha,\beta,\gamma) = [s_{\alpha}(x)s_{\beta}(y)s_{\gamma}(z)] \; \prod_{i,\;j,\; k} \frac{1}{1-x_iy_jz_k} = [x^{\alpha+\delta} y^{\beta+\delta} z^{\gamma+\delta} ] \; \Delta(x)\Delta(y)\Delta(z)\frac{1}{1-x_iy_jz_k}\\
=[x^{\alpha+\delta} y^{\beta+\delta} z^{\gamma+\delta} ]  \prod_{1\leq i<j\leq \ell} (x_i-x_j) (y_i-y_j)(z_i-z_j) \prod_{i,j,k=1}^{\ell} (1+x_iy_jz_{k} +\cdots +(x_iy_jz_k)^{m+\ell}).
\end{split}
\end{equation}
Here we truncated the infinite sums $(1-x_iy_jz_k)^{-1}$ to the maximal powers which could be involved in computing the necessary coefficients.

Using the notation from Lemma \ref{l:contingency} we have that
$$\prod_{i,j,k=1}^{\ell} (1+x_iy_jz_{k} +\cdots +(x_iy_jz_k)^{m+\ell}) = \sum_{a,b,c \in \{0,1,\ldots,m+\ell\}^\ell} C(a,b,c).$$

 Equation~\eqref{coeffs} then gives
\begin{equation}\label{gapp_eq}
g(\alpha,\beta,\gamma) \, =  \. \sum_{\sigma^1,\sigma^2,\sigma^3 \in S_\ell} \. \sgn(\sigma^1\sigma^2\sigma^3) \. C(\alpha+1-\sigma^1,\beta+1-\sigma^2,\gamma+1-\sigma^3)\.,
\end{equation}
where the sum goes over triples of permutations on $[1,\ldots,\ell]$ and
$$\alpha+1-\sigma^1 = (\alpha_1+1-\sigma^1_1,\ldots,\alpha_\ell+1-\sigma^1_{\ell})\ts,
$$
and similarly for the other terms. Applying Lemma~\ref{l:contingency} for each of the summands
in the above sum, we get that the Kronecker coefficient is computed in time
$$\bigl(\ell^2 \log(m+\ell)\bigr)^{O(\ell^3\log\ell)} \. (\ell!)^3 \, = \,
(\ell\ts \log m)^{O(\ell^3\log\ell)},$$
as desired.
\end{proof}

\medskip

\begin{proof}[Proof of Theorem~\ref{thm:gapp}]
Notice that~\eqref{gapp_eq} can be presented as the following difference: the number of contingency 3d arrays with marginals of the form $\alpha+1-\sigma^1,\beta+1-\sigma^2,\gamma+1-\sigma^3$, where $\sgn(\sigma^1\sigma^2\sigma^3)=1$, minus the number of contingency 3d arrays with marginals of the form $\alpha+1-\sigma^1,\beta+1-\sigma^2,\gamma+1-\sigma^3$, where $\sgn(\sigma^1\sigma^2\sigma^3)=-1$. Since each of these two numbers counts polynomially verifiable objects, and there are $(\ell!)^3/2$ many of them, we conclude the numbers  are in~\SP. This implies the result.
\end{proof}

\medskip

\begin{proof}[Proof of Theorem~\ref{thm:nu_bounded}]
Let $n=|\nu|$ and $t = n-\nu_1$. Then we have $t \leq \ell M$.
In at most $\ell$ steps we can check whether $|\la_i-\mu_i| >t$ for some $i$, in which case part (i) of  Lemma~\ref{l:reduction}  immediately implies that $g(\la,\mu,\nu)=0$.
Otherwise, part (ii) of the Reduction Lemma~\ref{l:reduction} implies that $g(\la,\mu,\nu)=g(\shortp{\la},\shortp{\mu},\shortp{\nu})$. Note that, by the definition of $\varphi$, we can compute $\shortp{\theta}$ in at most $3\ell$ steps.

In the Main Lemma~\ref{prop:g_poly}, let $\alpha= \shortp{\la}, \beta =\shortp{\mu},\gamma=\shortp{\nu}$ and  $m \leq 2t \ell^2\leq 2M\ell^3$. Then the lemma implies that the Kronecker coefficients can be found in time
$$
O(\ell\ts\log N) \. + \. (\ell^2 \log M)^{O(\ell^3\log\ell)},
$$
where the term $O(\ell\ts\log N)$ comes from the initial comparison of the parts of the partitions and the application of~$\varphi$.
\end{proof}

\bigskip

\section{Partitions of fixed lengths}\label{ss:fixed_length}

Here we consider the Kronecker Positivity problem \KP~when $\ell$ is fixed.
While the Main Lemma~\ref{prop:g_poly} already gives that \KP~$\in$~\P~in this case,
we present a different proof and a new complexity bound for this result.

\begin{thm}\label{thm:fixed}
The problem \KP{\rm $(\ell)$}~$\in$~\P~for every fixed~$\ell$. Moreover, for
$\la_1,\mu_1,\nu_1\le N$ and $\ell(\la),\ell(\mu),\ell(\nu)\le \ell$,
there is an algorithm which decides whether \ts $g(\la,\mu,\nu)>0$
\ts in time $O(\log N)$.
\end{thm}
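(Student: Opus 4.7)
The plan is to exploit the semigroup property (Corollary~\ref{c:semi}) to convert the positivity question into an integer programming feasibility instance of fixed dimension, and then to solve that instance in time linear in the bit-length of the input.

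First, I would fix $\ell$ and invoke Corollary~\ref{c:semi} to select, once and for all, a finite generating set $G_\ell = \{g_1,\ldots,g_k\} \subset \zz_{\ge 0}^{3\ell}$ of the Kronecker semigroup $\SM_\ell$.  The integer $k$ and the vectors $g_i$ depend only on~$\ell$, not on the input; in particular, both may be absorbed into constants for the asymptotic bound.  The key reformulation is that, by the definition of $\SM_\ell$ together with Theorem~\ref{t:semi},
$$g(\la,\mu,\nu) \ts > \ts 0 \ \Longleftrightarrow \ (\la,\mu,\nu) \ts \in \ts \SM_\ell \ \Longleftrightarrow \ \exists\ts c_1,\ldots,c_k \in \zz_{\ge 0} \ \text{ with } \ \sum_{i=1}^k c_i\ts g_i \ts = \ts (\la,\mu,\nu)\ts.$$

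The right-hand side is an integer programming feasibility problem in $k$ unknowns with $3\ell$ linear equations whose right-hand sides are integers in $[0,N]$.  Both the number of variables~$k$ and the number of constraints $3\ell$ are constants (for fixed~$\ell$), while the input bit-length is $O(\ell \log N) = O(\log N)$.  For such fixed-dimension IP, Lenstra-type algorithms (and their refinements by Kannan, Frank--Tardos, Eisenbrand) decide feasibility in time linear in the input bit-length, with the dimension-dependent factors folded into the implicit constant.  This yields the desired $O(\log N)$ runtime and, in particular, \KP$(\ell)\in\P$.

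The main obstacle, and the price paid for the linear-in-$\log N$ bound, is that Corollary~\ref{c:semi} is a purely existential statement: it guarantees that $G_\ell$ is finite but gives no effective bound on $|G_\ell|$ nor on the bit-sizes of its elements.  Consequently, the constant hidden in $O(\log N)$ depends on~$\ell$ in an uncontrolled way, and the proof does not yield an explicit complexity bound in~$\ell$ comparable to the one in the Main Lemma~\ref{prop:g_poly}; obtaining such an explicit bound would require effective generation bounds for $\SM_\ell$, which is precisely the content of the authors' remark that this argument is ``surprisingly powerful, but without giving explicitly the dependence on~$\ell$''.
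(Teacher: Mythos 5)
Your proposal is correct and follows essentially the same route as the paper: reduce positivity, via the finitely generated Kronecker semigroup $\SM_\ell$, to a fixed-dimension integer programming feasibility question $AY=X$, then invoke Lenstra-type results (with Eisenbrand's refinement for the linear dependence on the bit-length) to get the $O(\log N)$ bound. Your closing caveat about the ineffective generating set also matches the remark the authors make after their proof.
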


\begin{proof}[Proof of Theorem~\ref{thm:fixed}]
Let $v_1,\ldots,v_k \in \mathbb{Z}_+^{3\ell}$ be the basis of generators of the Kronecker semigroup~$\SM_\ell$ (see Section~\ref{ss:semigroup}). Let $A$ be the $(3\ell)\times k$ matrix whose columns are the vectors $v_1,\ldots,v_k$, i.e. $A= [v_1 \ldots v_k]$.  Deciding whether $g(\la,\mu,\nu)>0$ is equivalent to deciding whether the vector $X=(\la,\mu,\nu)^T$ is a nonnegative integral combination of the vectors $v_1,\ldots,v_k$. In other words, we need to decide whether $AY = X$ has a solution $Y \in \mathbb{Z}_+^{3\ell}$. Since $\ell$ and $k$ are fixed constants, the matrix $A$ is also fixed. The first part of the theorem follows from Lenstra's theorem stating that this integer linear program (ILP) can be solved in polynomial time in the size of the input (i.e.~$X$) for every fixed dimension (see e.g.~\cite{Sch}).  The second part follows from more recent results on the complexity of feasibility of~ILP (see~\cite{Eis}).
\end{proof}

\begin{rem}  {\rm
Observe that the dependence on $(\log N)$ in the theorem is linear, rather than polynomial of
degree $(\ell^3)$ in the Main Lemma~\ref{prop:g_poly}, making this algorithm more efficient.
Note also that the proof of Corollary~\ref{c:semi} of the finite generation of~$\SM_\ell$
given in~\cite{CHM} is inexplicit.
Thus we have no control over the size $k=k(\ell)$ as in the proof, not even whether this is a
computable function of~$\ell$.  Getting any bounds in this direction would be interesting,
as it would make effective the constant implied by the $O(\cdot)$ notation.
}
\end{rem}

\bigskip

\section{Complexity of deciding whether a character is zero}\label{s:char}

We now consider an analogue of the \KP~problem, which we call~\CP. Note that complexity of
characters has been studied before, see~\cite{Hel} for a treatment of the problem when the
input is in unary and a conclusion that the decision problem is then PP-complete.

\medskip

\noindent
{\sc Is the character $\chi^{\la}[\nu]=0$ (\CP):}

\smallskip

\noindent
{\bf Input:} \, Integers $N, \ell$, partitions $\la=(\la_1,\ldots,\la_\ell)$,
\noindent
 $\nu=(\nu_1,\ldots,\nu_\ell)$, where $0\le \la_i, \nu_i \le N$, and $|\la|=|\nu|$.

\smallskip

\noindent
{\bf Decide:} \, whether $\chi^{\la}[\nu]=0$.

\bigskip

The main result of this section is the following theorem.

\begin{thm}\label{t:char}
The problem \CP~is \NP--hard.
\end{thm}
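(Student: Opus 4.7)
The plan is to give a polynomial-time many-one reduction from \textsc{Subset Sum} to \CP. The engine is the two-row Jacobi--Trudi identity $s_{(a,b)} = h_a h_b - h_{a+1}h_{b-1}$ together with the standard inner-product pairing
\[
\bigl\langle h_a h_b, \. p_\nu\bigr\rangle \, = \, N(a,\nu) \, := \, \#\bigl\{\ts S\subseteq\{1,\ldots,\ell(\nu)\} \, : \, \textstyle\sum_{i\in S}\nu_i = a\ts\bigr\}\ts,
\]
which yields the clean formula
\[
\chi^{(a,b)}[\nu] \, = \, N(a,\nu) \. - \. N(a+1,\nu)\ts.
\]
Thus the two-row character is a \emph{difference} of subset-sum counts, and the central trick is to arrange for the second term to vanish on arithmetic grounds.

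Given a \textsc{Subset Sum} instance $(a_1,\ldots,a_k;T)$, I would first dispose of the trivial cases $T\in\{0,\sum_i a_i\}$, and then normalize via the complementation $S\leftrightarrow \{1,\ldots,k\}\sm S$ (which swaps $T$ with $\sum_i a_i - T$ and preserves feasibility) to assume $T\ge (\sum_i a_i)/2$. Now \emph{double} everything: set $n := 2\sum_i a_i$, take $\nu$ to be the partition obtained by sorting $(2a_1,\ldots,2a_k)$ into weakly decreasing order, and put $\lambda := (2T,\. n-2T)$. Since $n/2 \le 2T < n$, the pair $\lambda$ is a valid partition of~$n$ with two nonzero parts. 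The key observation is that every part of $\nu$ is now even, so no subset of $\nu$ can sum to the odd value $2T+1$; hence $N(2T+1,\nu)=0$ and
\[
\chi^{\lambda}[\nu] \, = \, N(2T,\nu)\ts,
\]
which vanishes if and only if no subset of $(a_1,\ldots,a_k)$ sums to~$T$. The construction is polynomial in the binary input size, so this gives \textsc{Subset Sum}~$\le_p$~\CP\ and the desired \NP-hardness.

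The conceptual content is essentially packaged into the two-row Jacobi--Trudi formula; the main obstacle is recognizing that the stray term $N(a+1,\nu)$ in the character identity can be killed by a parity argument after doubling, thereby turning the signed two-term formula into an unsigned subset-sum count. Once this device is in place, the remaining work (handling the two trivial endpoints of $T$ and verifying that $\lambda=(2T,\.n-2T)$ is always a bona fide two-row partition after the normalization) is routine, and no representation-theoretic machinery beyond the Jacobi--Trudi expansion and the $h$--$p$ duality between homogeneous and power-sum bases is required.
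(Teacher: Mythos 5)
Your proposal is correct and is essentially the paper's own argument: a reduction from Knapsack/Subset Sum using a two-row partition $\lambda$, a cycle type $\nu$ with all parts doubled, the two-term Jacobi--Trudi expansion of $\chi^\lambda$, and the parity observation that the second subset-count term vanishes. The only cosmetic difference is that you evaluate $h_ah_b$ against $p_\nu$ via the Hall inner product, while the paper performs the identical computation through the Murnaghan--Nakayama rule on the disconnected two-row skew shapes.
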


The proof follows from the following observation, implying that $\CP$~is at least
as hard as the Knapsack problem.

\begin{prop}
Suppose that $\ell(\la) =2$ and $\nu_i~\equiv 0 \pmod 2$. Then \CP~is \NP--hard.
\end{prop}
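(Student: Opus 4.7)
The plan is to give an explicit closed form for the two-row character $\chi^{(a,b)}[\nu]$ in the case that all parts $\nu_i$ are even, observe that this quantity is a subset-sum counting function, and reduce the classical \NP--complete problem \textsc{Subset Sum} to the non-vanishing question.

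First, I would apply the Jacobi--Trudi identity (equivalently, the two-variable Frobenius formula) to $\lambda=(a,b)\vdash n$ to obtain
$$
\chi^{(a,b)}[\nu] \, = \, \bigl[\ts x_1^{a+1}x_2^{b}\ts\bigr]\. (x_1-x_2)\prod_{i=1}^{\ell}\bigl(x_1^{\nu_i}+x_2^{\nu_i}\bigr)\ts.
$$
Expanding the product over subsets $S \subseteq [\ell]$ with $\sigma(S):=\sum_{i\in S}\nu_i$, a direct coefficient extraction then yields
$$
\chi^{(a,b)}[\nu] \, = \, N(a) \. - \. N(a+1)\ts, \qquad N(k):=\#\bigl\{S\subseteq [\ell]\ts:\ts \sigma(S)=k\bigr\}\ts.
$$

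Second, I would invoke the hypothesis $\nu_i\equiv 0 \pmod 2$. Then every subset sum $\sigma(S)$ is even, so $N(a+1)=0$ whenever $a$ is even. Thus, for any even target $a$ with $0\le a\le n$ and $a\ge n/2$,
$$
\chi^{(a,\ts n-a)}[\nu] \, = \, N(a)\ts,
$$
and deciding whether $\chi^{(a,n-a)}[\nu]=0$ is equivalent to deciding whether some subset of $\{\nu_1,\ldots,\nu_\ell\}$ sums to $a$.

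Third, I would give the reduction from \textsc{Subset Sum}: given positive integers $w_1,\ldots,w_\ell$ and target $T$, set $W:=\sum_i w_i+1$ and consider the partition $\nu=(2W,2w_1,\ldots,2w_\ell)$ of $n=2W+2\sum_i w_i$, together with $\lambda=(a,n-a)$ where $a=2W+2T$. All parts of $\nu$ are even, $\ell(\lambda)=2$, and the choice of $W$ guarantees $a\ge n-a$, so $\lambda$ is a genuine partition. A subset $S\subseteq\{0,1,\ldots,\ell\}$ satisfies $\sigma(S)=a$ iff it contains the dummy index~$0$ and the remaining indices select weights $w_i$ summing to~$T$. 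Hence $\chi^{\lambda}[\nu]\ne 0$ iff the original subset sum instance has a solution, and the transformation is clearly polynomial in the input size. This proves the proposition, and consequently Theorem~\ref{t:char}.

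The only step requiring care is the last one: ensuring that the constructed pair $(\lambda,\nu)$ is a valid input (partition of the correct shape with $a\ge n-a$ and all $\nu_i$ even) while preserving the bijective correspondence between satisfying subsets and subset-sum solutions. The dummy entry $2W$ handles both concerns in one stroke. The algebraic identity in the first step is routine from Jacobi--Trudi, and the parity observation in the second step is immediate, so the reduction itself is the substantive content.
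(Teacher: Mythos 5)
Your proposal is correct and takes essentially the same route as the paper: both use the two-row Frobenius/Jacobi--Trudi expression to write $\chi^{(a,b)}[\nu]$ as a difference of two subset-sum counting functions, use the evenness of the $\nu_i$ to kill the odd-sum term, and then reduce from \textsc{Knapsack}/\textsc{Subset Sum}. The only differences are cosmetic: the paper evaluates the two resulting skew characters via the Murnaghan--Nakayama rule rather than by direct coefficient extraction, and your dummy part $2W$ (ensuring $(a,n-a)$ is a genuine partition) is a minor refinement of a point the paper handles only implicitly.
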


\begin{proof}
We reduce to the classical \NP-complete \emph{Knapsack problem}:

\textsc{Knapsack:} Given the input $(k,a_1,\ldots,a_\ell)$,
determine whether there are $\epsilon_i \in \{0,1\}$ for $i=1,\ldots,\ell$, such that
$$k= \sum_{i=1}^\ell \epsilon_i a_i.$$

Consider the  \CP problem in the special case when $\la=(n-2k,2k)$ is a two-row partition and set $\nu=(2a_1,2a_2,\ldots)$ (assume the sequence is weakly decreasing, otherwise it can be sorted).
 Frobenius' formula (Jacobi-Trudi identity) gives
$$\chi^{\la} = \chi^{(n-2k)} \circ \chi^{(2k)} - \chi^{(n-2k+1)}\circ \chi^{(2k-1)}=\chi^{(n,2k)/(2k)} - \chi^{(n,2k-1)/(2k-1)}.$$
We evaluate the characters for skew shapes by the usual Murnaghan--Nakayama rule. In this case the height of each border strip has to be 1 to fit into skew tableaux consisting of disconnected rows. For any multiset of positive integers $R=\{r_1,\ldots,r_q\}$ and any integer $m$, denote by $\pa_R(m)$ the number of ways to write $m$ as a sum of entries from $R$. In other words
$$\pa_R(m) = \# \{ (i_1,i_2,\ldots) \; : \; 1\leq i_1 < i_2 < \cdots \leq q, \text{ s.t. } r_{i_1}+r_{i_2}+\cdots = m\}.$$
 For any $a$ and $b$ the Murnaghan--Nakayama rule gives
$$\chi^{(a+b,a)/(a)}[\nu] = \pa_\gi(a),$$
where $\gi=\{\nu_1,\ldots,\nu_\ell\}$. Hence
$$\chi^{\la}[\nu] = \pa_\gi(2k) - \pa_\gi(2k-1).$$
Since all elements in $\gi$ are even, we have that $\pa_\gi(2k-1) =0$, so $\chi^{\la}[\nu]=0$ if and only if $\pa_\gi(2k)=0$. Determining whether $\pa_\gi(2k)=0$ is the same as the \textsc{Knapsack} problem above.

%
\end{proof}

\bigskip

\section{Final remarks}\label{s:final}

\subsection{} \label{ss:fin-strong}
As mentioned in the introduction, the Main Lemma~\ref{prop:g_poly} implies
that one can compute Kronecker coefficients in time polynomial in the size
of the parts, but exponential in the number of parts.  This phenomenon is
similar to the well known distinction between \emph{weak} and \emph{strong}
\NP-completeness (see e.g.~\cite{GJ,Pap,Vaz}), corresponding to the input given in
\emph{binary} and in \emph{unary}. It applies to other counting problems as well.
For example, counting the number of perfect matchings in
graphs with large multiple edges and fixed number~$n$ of vertices is easily
polynomial, while for large~$n$ the problem is classically \SP-complete even when
edge multiplicities are $0$ or~$1$.  More generally, the  number of \textsc{BIN PACKING}
solutions is strongly \SP-complete, via a standard reduction to
\textsc{\#\ts\/TRIPARTITE MATCHINGS} problem (see e.g.~\cite{Pap}).
Of course, for other counting problems this phenomenon fails.  For example,
the counting of \textsc{KNAPSACK} solutions is polynomial when the input is in unary.

From this point of view, we believe that the bounds in the Main Lemma cannot be
substantially improved.

\begin{conj}
The Kronecker coefficients $g(\la,\mu,\nu)$
and the LR~coefficients $c^\al_{\be\ga}$ are strongly $\SP$-hard.
\end{conj}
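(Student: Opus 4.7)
The plan is to establish the conjecture by two reductions: first, to prove strong \SP-hardness for the LR coefficients $c^\al_{\be\ga}$, and then to transfer this hardness to Kronecker coefficients via the embedding of LR into reduced Kronecker coefficients.

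For the LR part, I would exploit the polytopal interpretation of $c^\al_{\be\ga}$ as the number of integer points in a Berenstein--Zelevinsky triangle, or equivalently a Knutson--Tao hive polytope. These polytopes sit in an ambient space of dimension $\Theta(\ell^2)$, and their boundary marginals are exactly the partitions $\al$, $\be$, $\ga$. The approach is to reduce from a strongly \SP-hard problem about counting integer points in structured polytopes, such as the number of 3-dimensional contingency tables with prescribed 2-way marginals given in unary, or the counting version of \textsc{3-Dimensional Matching}. The reduction needs to embed the combinatorial constraints inside the BZ/hive inequalities, encoding auxiliary variables in the interior coordinates of the triangle while reading off the marginals from the boundary rows. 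The crucial property to maintain is that every part of every partition produced by the reduction remains polynomial in the input size, so that strong hardness is preserved. The LR-rule interpretation via column-strict tableaux could be used as an alternative combinatorial target for the reduction, via a gadget that simulates the contingency table entries by the number of $i$'s in designated rows of the tableau.

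For the Kronecker part, I would invoke the classical stability result: for partitions $\al$, $\be$, $\ga$ with $|\be|+|\ga|=|\al|$, one has $c^\al_{\be\ga} = g\bigl((n-|\al|,\al),(n-|\be|,\be),(n-|\ga|,\ga)\bigr)$ once $n$ is at least linear in $|\al|+|\be|+|\ga|$, since then the Kronecker coefficient stabilizes to the reduced Kronecker coefficient, which in this length configuration agrees with the LR coefficient (see the discussion of reduced Kronecker coefficients in the introduction). Because the padding is only linear in the input, a unary-preserving reduction for LR immediately yields a unary-preserving reduction for Kronecker, so strong \SP-hardness of LR will entail strong \SP-hardness of \KN.

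The hard part will be the first step: establishing strong \SP-hardness of a lattice-point count inside the specific polytopal family arising from hives or BZ triangles. The obstacle is that the standard \SP-hardness proof for LR, due to Narayanan, uses partitions whose parts grow exponentially with the instance size and so does not yield strong hardness directly. Circumventing this requires a genuinely new gadget construction, most likely encoding \textsc{3-Dimensional Matching} or a similar strongly \SP-complete problem into the hive inequalities with only polynomially bounded marginals. A secondary difficulty is that, because LR coefficients are nonnegative and count honest integer points, the reduction must produce a bijective-style correspondence between witnesses rather than a signed sum of counts, which constrains the admissible gadgets considerably; this is in contrast with the signed contingency-table expression \eqref{gapp_eq} that was available for the Kronecker upper bound.
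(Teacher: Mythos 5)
This statement is one of the paper's \emph{conjectures} (stated among the final remarks in $\S$\ref{ss:fin-strong}); the paper gives no proof of it, and your proposal does not supply one either. What you have written is a reduction strategy in which the decisive step is explicitly left open: you acknowledge that a ``genuinely new gadget construction'' would be needed to embed a strongly \SP-complete counting problem (three-way tables with unary marginals, or a counting version of three-dimensional matching) into the lattice-point counts of hive/BZ polytopes while keeping all parts of $\al,\be,\ga$ polynomially bounded. That missing gadget is precisely the content of the conjecture on the LR side, so the proposal is a plan of attack rather than an argument. Until it is carried out, nothing is proved; indeed the paper points out that the known \SP-completeness proof for $c^\al_{\be\ga}$ (Narayanan's, via two-way contingency tables) cannot yield strong hardness, since the corresponding existence problem is polynomial in unary and even the strong \SP-completeness of counting two-way tables appears to be open, the only nearby strong-hardness result being De~Loera--Onn's for three-way statistical tables.

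The parts of your proposal that are sound are exactly the parts the paper already records as remarks rather than as the substance of the conjecture: the transfer from LR to Kronecker coefficients via Murnaghan's identity
\begin{equation*}
c^{\la}_{\mu\nu} \. = \. g\bigl((n-|\la|,\la),\ts(n-|\mu|,\mu),\ts(n-|\nu|,\nu)\bigr),
\end{equation*}
with $n$ only linearly large in the input, is a unary-preserving reduction, and the paper states in the same breath that ``the second part of the conjecture implies the first part, via Murnaghan's reduction.'' Your observation that a parsimonious (unsigned) reduction is required, in contrast with the signed contingency-table formula used for the upper bound, is also correct and worth keeping in mind. But since the core LR step is missing, the appropriate conclusion is that the statement remains a conjecture, and your text should be presented as a possible approach, not as a proof.
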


Of course, the second part of the conjecture implies the first part, via
Murnaghan's reduction (see below).  The final reduction in~\cite{Nar}
proving $\SP$-completeness of LR~coefficients, is from \emph{contingency
tables} (see~\cite{DG} for an introduction).  It is easy to see that the decision
problem for existence of contingency tables with given marginals is polynomial
when the input is in unary, and $\NP$-complete when the input is in binary.
However, despite the large body of literature, it seems open whether the counting
problem is strongly $\SP$-complete.  Note, however, that De~Loera and Onn proved
that for the three-way statistical tables (see~$\S$\ref{ss:thm_proof}),
the counting problem is strongly $\SP$-complete, even when one dimension
$m\ge 2$ is bounded, see~\cite{DO}.

\subsection{}
The Littlewood--Richardson coefficients are much better understood than the Kronecker coefficients.
In fact, the LR coefficients are actually a special case of the Kronecker coefficients:
\begin{equation}\label{eq:cg}
c^{\la}_{\mu\nu} \. = \.
g\bigl( (n -|\la|,\la), (n-|\mu|,\mu),(n-|\nu|,\nu)\bigr)\ts,
\end{equation}
for any partitions $\la,\mu,\nu$, such that $|\la|=|\mu|+|\nu|$, and any sufficiently
large~$n$. This equality is due to Murnaghan and Littlewood~\cite{L2,Mur2,Mur3}.
Suppose that $\nu_1 \leq M$. Then the Reduction Lemma applied to
the partitions in~\eqref{eq:cg}, gives the following alternative for the LR~coefficients.
When $|\la_i-\mu_i|>|\nu|$ for some $i$, we have $c^{\la}_{\mu\nu}=0$;
otherwise, there exist partitions $\psi(\la),\psi(\mu)$ of sizes at most $\ell^3 M$,
such that $c^{\la}_{\mu\nu} = c^{\psi(\la)}_{\psi(\mu)\nu}$.
Note that these results can also be obtained directly from the Littlewood--Richardson
rule, as in the proof of the Reduction Lemma. Let us mention also that for $i=1$,
part~(i) of the Reduction Lemma is proved in~\cite{Kle}(see also~\cite[$\S$2.9]{JK}).

The analogue of Theorem~\ref{thm:nu_bounded} for LR~coefficients is the
following result.

\begin{cor} \label{c:LR-alg}
When $\nu_1 \leq M$, $\la_1,\mu_1\leq N$ and $\ell(\mu),\ell(\la),\ell(\nu)\leq \ell$,
the Littlewood--Richardson coefficient $c^{\la}_{\mu\nu}$ can be computed
in time \ts $O(\ell\ts\log N) \. + \. (\ell\ts \log M)^{O(\ell^3\log\ell)}.$
\end{cor}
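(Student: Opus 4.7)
The plan is to reduce the Littlewood--Richardson computation to a Kronecker computation via Murnaghan's identity~\eqref{eq:cg}, and then invoke the Main Theorem~\ref{thm:nu_bounded} on the resulting triple. Concretely, I would first choose an integer $n$ just large enough for the stability \eqref{eq:cg} to hold (for example, $n = |\la| + \la_1$, which satisfies $n \le 2\ell N$), and set
\[
\wt\la \,=\, (n - |\la|, \la), \qquad \wt\mu \,=\, (n - |\mu|, \mu), \qquad \wt\nu \,=\, (n - |\nu|, \nu).
\]
Then $\wt\la,\wt\mu,\wt\nu \vdash n$ are each of length at most $\ell+1$, and by~\eqref{eq:cg} we have $c^{\la}_{\mu\nu} = g(\wt\la,\wt\mu,\wt\nu)$.

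Next I would verify that this Kronecker triple satisfies the hypotheses of Theorem~\ref{thm:nu_bounded}. The largest parts of $\wt\la,\wt\mu,\wt\nu$ are at most $n \le 2\ell N$. Crucially, the second part of $\wt\nu$ is exactly $\nu_1 \le M$, so $\wt\nu_2 \le M$. Thus Theorem~\ref{thm:nu_bounded} applies with length bound $\ell' = \ell+1$, size bound $N' = 2\ell N$, and second-part bound $M' = M$.

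Applying the theorem gives that $g(\wt\la,\wt\mu,\wt\nu)$ can be computed in time
\[
O\bigl((\ell+1)\log(2\ell N)\bigr) \,+\, \bigl((\ell+1)\log M\bigr)^{O((\ell+1)^3 \log(\ell+1))},
\]
which is asymptotically $O(\ell \log N) + (\ell \log M)^{O(\ell^3 \log \ell)}$, matching the claim (the $\log \ell$ contribution in the first summand is absorbed, and increasing $\ell$ by one affects the exponent in the second summand only up to the implicit constant).

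There is essentially no mathematical obstacle here: the hard work is encapsulated in Theorem~\ref{thm:nu_bounded}, and the only task is to match up parameters correctly. The most delicate step is the choice of $n$---it must be large enough for Murnaghan's stability to kick in, yet small enough that $\log n = O(\log N + \log \ell)$ so as not to spoil the complexity bound; the choice $n = |\la| + \la_1$ (or any $n \le O(\ell N)$ in the stable range) achieves both. Alternatively, one could apply the Reduction Lemma directly to $(\wt\la,\wt\mu,\wt\nu)$ first (as outlined in the paragraph preceding the corollary, yielding partitions $\psi(\la),\psi(\mu)$ of size $O(\ell^3 M)$), and then invoke the Main Lemma~\ref{prop:g_poly}; this gives the same bound with the two summands corresponding respectively to the reduction step and the final coefficient evaluation.
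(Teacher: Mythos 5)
Your proposal is correct and follows essentially the paper's own route: the corollary is meant to be read off by embedding $c^{\la}_{\mu\nu}$ as a Kronecker coefficient via Murnaghan's identity~\eqref{eq:cg} and then applying Theorem~\ref{thm:nu_bounded} (equivalently, the Reduction Lemma~\ref{l:reduction} followed by the Main Lemma~\ref{prop:g_poly}), with the key observation being exactly yours, namely that the second part of $(n-|\nu|,\nu)$ is $\nu_1\le M$. The one step you should not leave as a bare assertion is that $n=|\la|+\la_1$ already lies in the stable range of~\eqref{eq:cg}: the only effective stability bound in the paper (the corollary in $\S$\ref{ss:fin-red}) guarantees stability only for $n\ge \max\{\la_1,\mu_1\}+|\nu|+|\la|$, which is strictly larger than $|\la|+\la_1$ whenever $\nu\neq\varnothing$, so your specific choice is unsupported as written. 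This is harmless for the result: since $|\nu|\le|\la|\le \ell N$, taking $n=\max\{\la_1,\mu_1\}+|\nu|+|\la|=O(\ell N)$ still gives $\log n=O(\log \ell+\log N)$, lengths at most $\ell+1$, and $\wt\nu_2=\nu_1\le M$, so the claimed complexity is unaffected; simply use this $n$ (or justify the smaller threshold separately), as your own hedge ``any $n=O(\ell N)$ in the stable range'' already anticipates.
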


This result seems already nontrivial and hard to establish directly. It would be nice
to improve the complexity in the corollary 
(cf.~$\S$\ref{ss:fin-barv} below).  In a different direction, it would be interesting
to extend the Main Theorem~\ref{thm:nu_bounded} to \emph{plethystic constants}~$a_{\la\mu}^\pi$
(see e.g.~\cite{JK,Mul1,Sta}).

\subsection{} An analogous to~\KP, yet much simpler, is the positivity problem
for the Littlewood--Richardson coefficients:

\smallskip

\noindent
\textsc{LRP~Problem:}
Given the input $(\la,\mu,\nu)$ as in $\KP$, decide whether $c^{\la}_{\mu\nu}>0$.

\smallskip

Knutson and Tao's proof of the \emph{Saturation Conjecture}~\cite{KT} implies
that this decision problem is in~\P, since it reduces to a feasibility
problem of a linear program with $O(\ell^2)$ inequalities and constraints of
size~$O(N)$, see~\cite{MNS,BI-flow}.  Such problems can be solved in
polynomial time in the size~$s$ of the input, $s=\Theta(\ell \log N)$.

Unfortunately, the saturation theorem does not hold for Kronecker
coefficients (see e.g.\/ \cite[$\S 2.5$]{Kir}).
Mulmuley's original approach to \KP~was via Conjecture~\ref{conj:sp-pol}
and a weak version of the Saturation Conjecture.  The original version
of the latter was disproved in~\cite{BOR}, and modified by Mulmuley
in the appendix to~\cite{BOR} (see also~\cite{Mul1}).

While the decision problem for the positivity of
LR--coefficients is in~{\NP} even without the Knutson-Tao theorem,
 conjectures~\ref{conj:decision}  and~\ref{conj:sp} remain out of reach.
 As of now, there are no combinatorial interpretation for Kronecker
 coefficients $g(\la,\mu,\nu)$ except for a few special cases
(see the references in~\cite{PP-future,PPV}).

\subsection{}\label{ss:fin-barv}
For the LR coefficients, one can apply Barvinok's algorithm
for counting integer points in polytopes of BZ triangles, see~\cite{DM}.
In notation of Corollary~\ref{c:LR-alg}, these polytopes have dimension
$d=\theta(\ell^2)$ and input size $L=O(\ell \log N)$.
By Theorem~\ref{t:barv}, the resulting algorithm has cost
$$
L^{O(d\log d)} \, = \, \bigl(\ell \log N\bigr)^{O(\ell^2\log \ell)}\..
$$
This is roughly comparable with the result of Corollary~\ref{c:LR-alg},
larger in some cases and smaller in other.

Note also that in light of Barvinok's algorithm, one can view the
Main Lemma~\ref{prop:g_poly} as an evidence in support of
Mulmuley's Conjecture~\ref{conj:sp-pol}.


\subsection{}
A positive combinatorial interpretation for the Kronecker coefficients,
analogous to the LR--rule, would likely show that the decision problem
is in~\NP~and the counting problem in~\SP. Such interpretation would also
imply a combinatorial interpretation for the difference between the number
of partitions of $k$ and the number of partitions of $k-1$, which fit into
an $\ell \times m$ rectangle (see~\cite{PP}). Formally, this difference
is equal to $g\bigl(m^\ell,m^\ell,(n-k,k)\bigr)$; in full generality
its combinatorial interpretation is already highly nontrivial and will
appear in~\cite{PP-future} (see also~\cite{BO}).

\subsection{}\label{fr:comb_kp}
The known results so far do not prove Mulmuley's Conjecture~\ref{conj:decision}, even when
the input is in unary. As the current results suggest, the computational
complexity comes from two sources -- the length~$\ell$ of the partitions,
and the size~$N$ of their parts. While it is often possible to reduce the
problem to one where the size of the parts is $O(\log N)$, the exponential
dependence on~$\ell$ cannot be reduced with current methods.
As suggested by the proof of Lemma~\ref{prop:g_poly} and the equivalent formulas
through inverse Kostka coefficients (see~\cite{Val1}), the Kronecker coefficients are
given by alternating sums over all permutations in $S_{\ell}$, whose number is~$O(\ell^\ell)$.

\subsection{} \label{ss:fin-semi}
The semigroup property (Theorem~\ref{t:semi})
was conjectured by Klyachko in~2004, and recently proved in~\cite{CHM}
(see also~\cite{Man}).  It is the analogue of the semigroup property
of LR~coefficients proved by Brion and Knop in~1989 (see~\cite{Zel}
for the history and the related results).

\subsection{} \label{ss:fin-comp}
Throughout the paper, we are rather relaxed in our treatment of the
algorithm timing.  Our time complexity is in the cost of arithmetic
operations with integers as in the input.

\subsection{} \label{ss:fin-red}
The \emph{reduced Kronecker coefficients}, see e.g.~\cite{BDO,BOR2}, are defined as
\begin{equation}\label{eq:red}
\bar{g}(\la,\mu,\nu) \. = \. g\bigl((n-|\la|,\la),(n-|\mu|,\mu),(n-|\nu|,\nu)\bigr) \quad \text{for $n$ large enough.}
\end{equation}
Note that equation~\eqref{eq:red} generalizes equation~\eqref{eq:cg}
when there is no constraint $|\la| = |\mu|+|\nu|$.
The fact that $\bar{g}(\la,\mu,\nu)$ are well defined has been established by Murnaghan~\cite{Mur}
(see also~\cite{Brion,Val1}), but determining effective bounds on $n$ for which the sequence
stabilizes is still an active area. The Reduction Lemma~\ref{l:reduction} immediately implies
Murnaghan's result that they stabilize. Its proof, and more specifically the map $\shp$,
also gives the following upper bound.

\begin{cor}
Equation~\eqref{eq:red} holds for all triples of partitions $(\la,\mu,\nu)$,
such that $|\nu|\le|\mu|\le |\la|$ and $n\le \max\{\la_1,\mu_1\} +|\nu|+|\la|$.
\end{cor}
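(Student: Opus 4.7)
The plan is to apply the Reduction Lemma~\ref{l:reduction} to the three ``extended'' partitions $\Lambda := (n-|\la|,\la_1,\la_2,\ldots)$, $M := (n-|\mu|,\mu_1,\mu_2,\ldots)$, $N := (n-|\nu|,\nu_1,\nu_2,\ldots)$, each a partition of $n$, whose Kronecker coefficient is the right-hand side of~\eqref{eq:red}. In the lemma's notation I assign $N$ to the role of the third partition, so the parameter $t = n - N_1 = |\nu|$ is independent of $n$.

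First, a degenerate case: if either $|\la|-|\mu|>|\nu|$, or $|\la_{i-1}-\mu_{i-1}|>|\nu|$ for some $i\geq 2$, then $|\Lambda_i-M_i|>t$ (the first case at $i=1$ using $|\Lambda_1-M_1|=|\la|-|\mu|$), so part~(i) of the Reduction Lemma forces $g(\Lambda,M,N)=0$. Both conditions depend only on $\la,\mu,\nu$, so this vanishing is uniform in $n$ and the equation holds with $\bar g(\la,\mu,\nu)=0$. Otherwise, part~(ii) gives $g(\Lambda,M,N)=g(\shortp{\Lambda},\shortp{M},\shortp{N})$, and what remains is to show that the reduced triple is $n$-independent once $n$ reaches the claimed threshold.

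The crucial observation is the effect of the reduction map $\shp$ on the first row. In the lemma's notation, $P_1=(\Lambda\cap M)_1 = n-|\la|$ (using $|\la|\geq|\mu|$) and $\Omega_2=(\Lambda\cup M)_2=\max\{\la_1,\mu_1\}$, so the defining inequality $P_1\geq\Omega_2+t$ for $1\in I$ becomes exactly
\[
n \;\geq\; |\la|+\max\{\la_1,\mu_1\}+|\nu|.
\]
When this holds, the formula $\shortp{\Lambda}_1=\Lambda_1-P_{i_1}+t(\ell-i_1+ind_I(i_1))$ specializes to $t(\ell-1+ind_I(1))$, killing the $n$-dependence in $\shortp{\Lambda}_1$, and similarly in $\shortp{M}_1$. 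For $j\geq 2$ one has $i_j\geq 2$, and both $\Lambda_j=\la_{j-1}$, $P_{i_j}=\min\{\la_{i_j-1},\mu_{i_j-1}\}$ are manifestly independent of $n$; the same analysis applies to $M$. By the definition of $\shp$ on the third partition, $\shortp{N}_1=|\shortp{\Lambda}|-|\nu|$ and $\shortp{N}_j=\nu_{j-1}$ for $j\geq 2$, so $\shortp{N}$ is $n$-independent as well. Hence $g(\Lambda,M,N)$ equals a single value $\bar g(\la,\mu,\nu)$ for all $n$ in the range.

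The one step requiring care is verifying that ``separation at row $1$'' (i.e., $1\in I$) really does what is claimed to each of $\shortp{\Lambda}_1$, $\shortp{M}_1$, and then propagates correctly through the auxiliary definition of $\shortp{N}_1=r-s$; this is a direct unraveling of the map $\shp$ in Section~\ref{ss:thm_proof}. Combining this with the degenerate case handled via part~(i) gives the uniform equality~\eqref{eq:red} at the stated threshold, which is the content of the corollary.
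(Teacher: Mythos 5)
Your argument is correct and is essentially the paper's own proof: you apply the reduction map $\shp$ of Lemma~\ref{l:reduction} with $t=|\nu|$, observe that the stated threshold on $n$ is precisely the condition $1\in I$ (since $\rho_1=n-|\la|$ and $\omega_2=\max\{\la_1,\mu_1\}$), and conclude that the reduced triple $\shortp{\Lambda},\shortp{M},\shortp{N}$ is independent of $n$, so the Kronecker coefficient has already stabilized. Your explicit handling of the part~(i) vanishing case and of $\shortp{N}_1$ merely fills in details the paper's sketch leaves implicit.
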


This result matches the bound in \cite{Val1} in the cases when $\la_1\geq \mu_1$,
but is slightly weaker otherwise. This result is also comparable to the result of~\cite[Theorem 1.4]{BOR2}, where, for example, in the case $\la=\mu=\nu$ they coincide, but in general is also slightly weaker.

\begin{proof}[Sketch of proof.]
Apply the map $\shp$ from Lemma \ref{l:reduction} with $t=|\nu|$ to the partitions $(n-|\la|,\la)$ and $(n-|\mu|,\mu)$. When $n \geq \max\{\la_1,\mu_1\} +|\nu|+|\la|$, we have  $1 \in I$. Then, for any partition~$\theta$ as in the proof, the first part~$\shortp{\theta}_1$ does not depend on $n$ anymore. Thus  $\shortp{n-|\la|,\la}$, $\shortp{n-|\mu|,\mu}$, $\shortp{n-|\nu|,\nu}$ are also independent of~$n$. The rest follows from the proof of Lemma~\ref{l:reduction}.
\end{proof}

\smallskip

It would be interesting to see if the Reduction Lemma can be further extended to
imply better bounds, or whether there is a plethystic generalization.

\subsection{} \label{ss:fin-char}
Note that the elementary construction in the proof of Theorem~\ref{t:char} also implies
that computing characters is $\SP$-hard, a result obtained earlier in~\cite{Hel}.
In~\cite{PPV}, the authors prove that positivity of certain Kronecker coefficients is a consequence
of nonzero character values, which are easier to establish via the Murnaghan--Nakayama rule in
certain large cases.
Unfortunately, the fact that \CP~is \NP--hard (Theorem~\ref{t:char}), implies that this
approach is unlikely to have complexity implications.  Similarly,  using characters to
efficiently compute the Kronecker coefficients via the formula in~$\S$\ref{ss:def-kron},
is most likely going to be futile (cf.~\cite[$\S$13.5]{BCS}).

\bigskip

\noindent
{\bf Acknowledgements.} \
We are grateful to Abdelmalek Abdesselam, Jonah Blasiak, Emmanuel Briand, Jin-Yi Cai,
Jes{\'u}s De Loera, Allen Knutson, Joseph Landsberg, L\'{a}szl\'{o} Lov\'{a}sz,
Rosa Orellana and Bernd Sturmfels for interesting conversations and helpful remarks.
We are especially thankful to Christian Ikenmeyer and Ernesto Vallejo for reading
a draft of this manuscript and help with the references, and anonymous referees 
for careful reading of the previous version of the paper.  The first author gives
special thanks to Alexander Barvinok for the generosity in explaining his
eponymous algorithm many years ago and clarifying its complexity issues.
The first author was partially supported by the NSF~grant, the second
by the Simons Postdoctoral Fellowship.


\vskip1.4cm


{\footnotesize

}


\begin{thebibliography}{dddddd}\label{refpage}

\bibitem[BO]{BO}
C.~M.~Ballantine and R.~C.~Orellana,
A combinatorial interpretation for the coefficients in the Kronecker product $s_{(n-p,p)}\ast s_\lambda$,
\emph{S\'em. Lothar. Combin.}~\textbf{54A} (2006), Art.~B54Af, 29~pp.

\bibitem[Bar]{Bar}
A.~Barvinok, \emph{Integer points in polyhedra}, European Mathematical Society,
Z\"{u}rich, 2008.

\bibitem[BP]{Bar-alg}
A.~Barvinok and J.~E.~Pommersheim,
An algorithmic theory of lattice points in polyhedra,
in \emph{New perspectives in algebraic combinatorics},
Cambridge Univ.~Press, Cambridge, UK, 1999, 91--147.

\bibitem[BSS]{BSS}
G.~Benkart, F.~Sottile and J.~Stroomer,
Tableau switching: algorithms and applications,
\emph{J.~Combin. Theory Ser.~A}~\textbf{76} (1996), 11--43.

\bibitem[BR]{BR}
A.~Berele and A.~Regev,
Hook Young diagrams with applications to combinatorics and representations of Lie superalgebras,
\emph{Adv. Math.}~\textbf{64} (1987), 118--175.

\bibitem[Bla]{Bla}
J.~Blasiak,
Kronecker coefficients for one hook shape,
{\tt arXiv:}{\tt 1209.2018}.



\bibitem[BDO]{BDO}
C.~Bowman, M.~De~Visscher and R.~Orellana,
The partition algebra and the Kronecker coefficients,
 to appear in {\em Trans.~AMS}; {\tt arXiv:}{\tt 1210.5579}.

\bibitem[BOR1]{BOR}
E.~Briand, R.~Orellana and M.~Rosas,
Reduced Kronecker coefficients and counter-examples to
Mulmuley's strong saturation conjecture~SH.
With an appendix by Ketan Mulmuley,
\emph{Comput. Complexity}~\textbf{18} (2009), 577--600.

\bibitem[BOR2]{BOR2}
E.~Briand, R.~Orellana and M.~Rosas,
The stability of the Kronecker product of Schur functions,
\emph{J.~Algebra}~\textbf{331} (2011), 11--27.

\bibitem[Bri]{Brion}
M.~Brion, Stable properties of plethysm: on two conjectures of Foulkes,
\emph{Manuscripta Math.}~\textbf{80} (1993), 347--371.

\bibitem[B\"ur]{Bur}
P.~B\"urgisser, Review of~\cite{MS}, MR2421083 (2009j:68067).

\bibitem[BCS]{BCS}
P.~B\"urgisser, M.~Clausen and M.~A.~Shokrollahi,
\emph{Algebraic complexity theory}, Springer, Berlin, 1997.

\bibitem[BI2]{BI-flow}
P.~B\"urgisser and C.~Ikenmeyer,
Deciding Positivity of Littlewood-Richardson Coefficients,
\emph{SIAM J.~Discrete Math.}~\textbf{27} (2013), 1639--1681.

\bibitem[BI1]{BI}
P.~B\"urgisser and C.~Ikenmeyer,
The complexity of computing Kronecker coefficients,
in \emph{Discrete Math. Theor. Comput. Sci. Proc.},
Assoc.~DMTCS, Nancy, 2008, 357--368.




\bibitem[CDW]{CDW}
M.~Christandl, B.~Doran and M.~Walter,
Computing Multiplicities of Lie Group Representations,
in \emph{Proc.~53rd FOCS}, 2012, 639--648; {\tt arXiv:}{\tt 1204.4379}.

\bibitem[CHM]{CHM}
M.~Christandl, A.~W.~Harrow and G.~Mitchison,
Nonzero Kronecker coefficients and what they tell us about spectra,
\emph{Comm. Math. Phys.}~\textbf{270} (2007), 575--585.

\bibitem[DHTY]{DHTY}
J.~De Loera, R.~Hemmecke, J.~Tauzer and R.~Yoshida,
Effective lattice point counting in rational convex polytopes,
\emph{J.~Symbolic Comput.}~\textbf{38} (2004), 1273--1302.

\bibitem[DM]{DM}
J.~De Loera and T.~B.~McAllister,
On the computation of Clebsch-Gordan coefficients and the dilation effect,
\emph{Experiment.\/ Math.}~\textbf{15} (2006), 7--19.

\bibitem[DO]{DO}
J.~De Loera and S.~Onn,
The complexity of three-way statistical tables,
\emph{SIAM J.~Comput.}~\textbf{33} (2004), 819--836.

\bibitem[DG]{DG}
P.~Diaconis and A.~Gangolli,
Rectangular arrays with fixed margins, in
\emph{Discrete probability and algorithms},
Springer, New York, 1995, 15--41.

\bibitem[Dvir]{Dvir}
Y.~Dvir, On the Kronecker product of $S_n$ characters,
\emph{J.~Algebra}~\textbf{154} (1993), 125--140.

\bibitem[DK]{DK}
M.~Dyer and R.~Kannan,
On Barvinok's algorithm for counting lattice points in fixed dimension,
\emph{Math.\/ Oper.\/ Res.}~\textbf{22} (1997), 545--549.

\bibitem[Eis]{Eis}
F.~Eisenbrand,
Fast Integer Programming in Fixed Dimension, in
\emph{Lecture Notes in Computer Science}, Vol.~{2832}
(Proc.~11th ESA), 2003, 196--207.


\bibitem[F1]{F1}
L.~Fortnow, Counting complexity, in \emph{Complexity theory retrospective, II}, Springer,
New York, 1997, 81--107.

\bibitem[F2]{For}
L.~Fortnow, The Status of the P Versus NP Problem,
\emph{Comm.~ACM}~\textbf{52} (2009), No.~9, 78--86.

\bibitem[GJ]{GJ}
M.~Garey and D.~S.~Johnson,
\emph{Computers and Intractability: A Guide to the Theory of \NP-completeness},
Freeman, San Francisco, CA, 1979.

\bibitem[Hep]{Hel}
C.~T.~Hepler,
\emph{On The Complexity of Computing Characters of Finite Groups}, thesis,
University of Calgary, 1994; available at \ts {\tt https://dspace.ucalgary.ca/handle/1880/45530}

\bibitem[Ike]{Ike}
C.~Ikenmeyer,  \emph{Geometric Complexity Theory, Tensor Rank,
and Littlewood-Richardson Coefficients}, thesis,  University of Paderborn, 2012;
available at \ts {\tt http://bit.ly/SVIr0M}

\bibitem[JK]{JK}
G.~James and A.~Kerber, \emph{The representation theory of the symmetric group},
Addison-Wesley, Reading, MA, 1981.

\bibitem[Kir]{Kir}
A.~N.~Kirillov,
An invitation to the generalized saturation conjecture,
\emph{Publ. RIMS}~\textbf{40} (2004), 1147--1239.

\bibitem[Kle]{Kle}
M.~Klemm,  Tensorprodukte von Charakteren der symmetrischen Gruppe (in German),
\emph{Arch.\/ Math.\/ (Basel)}~\textbf{28} (1977), 455--459.

\bibitem[KT]{KT}
A.~Knutson and T.~Tao, The honeycomb model of {${\rm GL}_n({\bf C})$} tensor
products.~{I}. {P}roof of the saturation conjecture, \emph{J.~AMS}~\textbf{12} (1999),
1055--1090.

\bibitem[Las]{Las}
A.~Lascoux, Produit de Kronecker des repr\'{e}sentations du groupe sym\'{e}trique, in
\emph{Lecture Notes in Math.}~\textbf{795}, Springer, Berlin, 1980, 319--329.

\bibitem[L1]{Lit}
D.~E.~Littlewood, The Kronecker product of symmetric group representations,
\emph{J.~London Math. Soc.}~\textbf{31} (1956), 89--93.

\bibitem[L2]{L2}
D.~E.~Littlewood,
Products and plethysms of characters with orthogonal, symplectic and symmetric groups,
\emph{Canad.\/ J.~Math.}~\textbf{10} (1958), 17--32.

\bibitem[Mac]{Mac}
I.~G.~Macdonald, Symmetric functions and Hall polynomials (Second ed.),
Oxford U.~Press, New York, 1995.

\bibitem[Man]{Man}
L.~Manivel, On rectangular Kronecker coefficients,
\emph{J.~Algebraic Combin.}~\textbf{33} (2011), 153--162.

\bibitem[M1]{Mul1}
K.~D.~Mulmuley,
Geometric complexity theory~VI: The flip via positivity,
{\tt arXiv:}{\tt 0704.0229v4}

\bibitem[M2]{Mul2}
K.~D.~Mulmuley, The GCT program toward the P vs.~NP problem,
\emph{Comm.~ACM}~\textbf{55} (2012), No.~6, 98--107.

\bibitem[MNS]{MNS}
K.~D.~Mulmuley, H.~Narayanan and M.~Sohoni,
Geometric complexity theory~III.  On deciding nonvanishing of a
Littlewood-Richardson coefficient, \emph{J. Algebraic Combin.}~\textbf{36}
(2012), 103--110.

\bibitem[MS]{MS}
K.~D.~Mulmuley and M.~Sohoni,
Geometric complexity theory~II. Towards
explicit obstructions for embeddings among class varieties,
\emph{SIAM J.~Comput.}~\textbf{38} (2008), 1175--1206.

\bibitem[Mu1]{Mur}
 F.~D.~Murnaghan, The Analysis of the Direct Product of Irreducible
Representations of the Symmetric Groups, \emph{Amer.~J.~Math.}~\textbf{60}
(1938), 44--65.

 \bibitem[Mu2]{Mur2}
F.~D.~Murnaghan,
On the analysis of the Kronecker product of irreducible representations of~$S_n$,
\emph{Proc.\/ Nat.\/ Acad.\/ Sci.\/ USA}~\textbf{41} (1955), 515--518.


\bibitem[Mu3]{Mur3}
F.~D.~Murnaghan,
On the Kronecker product of irreducible representations of the symmetric group,
\emph{Proc.\/ Nat.\/ Acad.\/ Sci.\/ USA}~\textbf{42} (1956), 95--98.



\bibitem[Nar]{Nar}
H.~Narayanan,
On the complexity of computing Kostka numbers and Littlewood-Richardson coefficients,
\emph{J.~Algebraic Combin.}~\textbf{24} (2006), 347--354.

\bibitem[PP3]{PP-future}
I.~Pak and G.~Panova, Combinatorics and complexity of Kronecker coefficients, in preparation.

\bibitem[PP2]{PP2}
I.~Pak and G.~Panova, Strict unimodality of $q$-binomial coefficients,
\emph{C.\/R.\/ Math.\/ Acad.\/ Sci.\/ Paris}~\textbf{351} (2013), 415--418.

\bibitem[PP1]{PP}
I.~Pak and G.~Panova, Unimodality via Kronecker product, to appear in
\emph{J.~Algebraic Combin.}, \texttt{arXiv:1304.5044}.





\bibitem[PPV]{PPV}
I.~Pak, G.~Panova and E.~Vallejo,
Kronecker products, characters, partitions, and the tensor square conjectures,
\texttt{arXiv:1304.0738}.

\bibitem[PV1]{PV}
I.~Pak and E.~Vallejo,
Combinatorics and geometry of Littlewood-Richardson cones,
\emph{European J.~Combin.}~\textbf{26} (2005), 995--1008.

\bibitem[PV2]{PV2}
I.~Pak and E.~Vallejo,
Reductions of Young tableau bijections,
\emph{SIAM J.~Discrete Math.}~\textbf{24} (2010), 113--145.

\bibitem[Pap]{Pap}
C.~H.~Papadimitriou, \emph{Computational complexity}, Addison-Wesley, Reading,
MA, 1994.

\bibitem[R1]{Regan}
K.~W.~Regan,  Understanding the Mulmuley-Sohoni approach to P vs.~NP,
\emph{Bull.~Eur.~Assoc.~Theor. Comput.~Sci.}~\textbf{78} (2002), 86--99.

\bibitem[R2]{Reg}
A.~Regev,
Kronecker multiplicities in the $(k,l)$ hook are
polynomially bounded, {\tt arXiv:}{\tt 1011.1636}.

\bibitem[Rem]{Rem}
J.~B.~Remmel, A formula for the Kronecker products of Schur functions of
hook shapes, \emph{J.~Algebra}~\textbf{120} (1989), 100--118.

\bibitem[Ros]{Ros} M.~H.~Rosas,
The Kronecker product of Schur functions indexed by two row shapes of hook shapes,
\emph{J. Algebraic Combin.}~\textbf{14} (2001), 153--173.

\bibitem[Sag]{Sag}
B.~E.~Sagan, \emph{The symmetric group} (Second ed.), Springer, New York, 2001.

\bibitem[Sch]{Sch}
A. Schrijver, \emph{Theory of linear and integer programming}, John Wiley,
Chichester, 1986.

\bibitem[Sta]{Sta}
R.~P.~Stanley, \emph{Enumerative combinatorics}, Vol.~2,
Cambridge U.~Press, Cambridge, 1999.

\bibitem[V1]{Val1}
E.~Vallejo,
Stability of Kronecker products of irreducible characters of the symmetric group,
\emph{Electron.~J.~Combin.}~\textbf{6} (1999), RP~39, 7~pp.

\bibitem[V2]{Val2}
E.~Vallejo,
Multiple hooks in Kronecker products of $S_n$ characters and a conjecture of
Berele and Imbo,
\emph{J. Algebra}~\textbf{259} (2003), 300--309.

\bibitem[Vaz]{Vaz}
V.~V.~Vazirani, \emph{Approximation algorithms},
Springer, Berlin, 2001.

\bibitem[Zel]{Zel}
A.~Zelevinsky,
Littlewood-Richardson semigroups, in
\emph{New perspectives in algebraic combinatorics},
Cambridge Univ.~Press, Cambridge, 1999, 337--345.

\end{thebibliography}
\end{document}